\newtheorem{thm}{Theorem}[section]
\newtheorem*{thma}{Theorem~A}
\newtheorem*{thmb}{Theorem~B}
\newtheorem{cor}[thm]{Corollary}
\newtheorem{claim}{Claim}[thm]
\newtheorem{prop}[thm]{Proposition}
\theoremstyle{definition}
\newtheorem{defn}[thm]{Definition}
\theoremstyle{remark}
\newtheorem{remark}[thm]{Remark}
\DeclareMathOperator{\U}{U}
\DeclareMathOperator{\MA}{\sf MA}
\DeclareMathOperator{\gma}{\sf GMA}
\DeclareMathOperator{\zfc}{\sf ZFC}
\DeclareMathOperator{\gch}{\sf GCH}
\DeclareMathOperator{\im}{Im}
\DeclareMathOperator{\cf}{cf}
\DeclareMathOperator{\otp}{otp}
\DeclareMathOperator{\dom}{dom}
\DeclareMathOperator{\add}{Add}
\newcommand{\s}{\subseteq}
\renewcommand\mid{\mathrel{|}\allowbreak}
\subjclass[2010]{Primary 03E02; Secondary 03E35}
\keywords{Partition relations, Strong colorings, Cochromatic number, Generalized Martin's axiom.}
\title[Ramsey theory over partitions I]{Ramsey theory over partitions I:\\Positive Ramsey relations from forcing axioms}
\author[M. Kojman]{Menachem Kojman}
\address{Department of Mathematics, Ben-Gurion University of the Negev, P.O.B. 653, Be’er Sheva, 84105 Israel}
\urladdr{https://www.math.bgu.ac.il/~kojman/}
\author[A. Rinot]{Assaf Rinot}
\address{Department of Mathematics, Bar-Ilan University, Ramat-Gan 5290002, Israel.}
\urladdr{http://www.assafrinot.com}
\author[J. Stepr\={a}ns]{Juris Stepr\={a}ns}
\address{Department of Mathematics \& Statistics, York University, 4700 Keele Street, Toronto, Ontario, Canada M3J 1P3}
\urladdr{http://www.math.yorku.ca/~steprans/}
\date{Preprint as of April 16, 2022. For the latest version, visit \textsf{http://assafrinot.com/paper/49}.}
\begin{document}
\begin{abstract}   In this series of papers, we advance Ramsey theory of colorings over partitions.
In this part, a correspondence between anti-Ramsey properties
of partitions and chain conditions of the natural forcing notions  that
homogenize  colorings over them is uncovered. 
At the level of the first uncountable cardinal this gives rise to a duality theorem under Martin's Axiom: a function $p:[\omega_{1}]^{2}\rightarrow\omega$ witnesses a weak negative Ramsey relation when $p$ plays the role of  a coloring if and only if a positive Ramsey relation holds over $p$ when $p$ plays the role of a partition.

The consistency of positive Ramsey relations over partitions does not stop at the first uncountable cardinal:
it is established that at any prescribed uncountable cardinal these relations follow
from forcing axioms without large cardinal strength. This result solves in
  particular two problems from \cite{strongcoloringpaper}.
\end{abstract}
\maketitle

\section{Introduction}

\subsection{Ramsey relations} For (finite or infinite) cardinals $\kappa$, $\lambda$ and $\theta$, the Ramsey relation
$\kappa\rightarrow(\lambda)^2_\theta$ asserts that for every coloring $c:[\kappa]^2\rightarrow\theta$ of unordered pairs of
ordinals in $\kappa$ by $\theta$ colors there is a $c$-homogeneous set of size $\lambda$, i.e.,
there exist $A\s\kappa$ of size $\lambda$ and $\tau\in\theta$ such that $c(\alpha,\beta)=\tau$ for any pair $\alpha<\beta$
of elements of $A$. In this  notation, Ramsey's famous theorem
\cite{ramsey} is written as $\aleph_0\rightarrow(\aleph_0)^2_2$.

If $\aleph_{0}$  is
substituted in this relation by the next infinite cardinal, $\aleph_{1}$, then the relation fails,
meaning that its negation 
$\aleph_1\nrightarrow(\aleph_1)^2_2$ holds. In fact, Sierpi{\'n}ski \cite{MR1556708} proved that
$2^\lambda\nrightarrow(\lambda^+)^2_2$ holds for every infinite cardinal $\lambda$ and as $\lambda^{+}\le 2^{\lambda}$, by
monotonicity, $\kappa\nrightarrow(\kappa)^2_2$  for every successor cardinal $\kappa=\lambda^+$.

Later on, work by Tarski, Erd\H os, Hanf and others (see \cite[Chapter~2]{Kanamori}) has
shown that if the relation $\kappa\rightarrow(\kappa)^{2}_{2}$ happens to  hold for some cardinal $\kappa>\aleph_0$
then $\kappa$ must be a weakly compact cardinal, a large cardinal which is
inaccessible and satisfies the higher analog of K\"onig's lemma, that any tree of size $\kappa$ all of whose levels have
size $<\kappa$ admits a branch of size $\kappa$.
As the existence of an inaccessible cardinal cannot be proved from the axiomatic system   of
set theory, $\zfc$,\footnote{Unless $\zfc$ is inconsistent.} this established that in $\zfc$ alone it would be  impossible to
prove that any cardinal $\kappa$ other than
$\aleph_{0}$ satisfies the positive Ramsey relation $\kappa\rightarrow(\kappa)^{2}_{2}$.

A weaker Ramsey relation than the one above is the square brackets relation
$\kappa\rightarrow[\lambda]^2_\theta$, which asserts that for every coloring $c:[\kappa]^2\rightarrow\theta$ there exist
$A\s\kappa$ of size $\lambda$ and $\tau\in\theta$ such that $c(\alpha,\beta)\neq\tau$ for any pair $\alpha<\beta$ of elements
of $A$. Erd\H os, Hajnal and Rado \cite{MR202613} proved $\lambda^+\nrightarrow[\lambda^+]^2_{\lambda^+}$
(i.e. the negation of  $\lambda^{+}\rightarrow[\lambda^{+}]^{2}_{\lambda^{+}}$) for every infinite cardinal $\lambda$ at
which the Generalized Continuum Hypothesis ($\gch$) holds, that is, for every  $\lambda$ for which
$2^\lambda=\lambda^+$.

Then, in \cite{TodActa}, Todor{\v{c}}evi{\'c} waived the hypothesis $2^\lambda=\lambda^+$
for infinite cardinals $\lambda$ that are regular, establishing in particular that the
negative Ramsey relation $\aleph_1\nrightarrow[\aleph_1]^2_{\aleph_1}$ is a theorem of $\zfc$. Thus, the
analog of Ramsey's theorem for $\aleph_{0}$ fails strongly on all successors of regular
cardinals outright in $\zfc$: on every such $\lambda^{+}$ there is a coloring of pairs by
$\lambda^{+}$ colors with the property that no color is omitted on the pairs from any
subset $A\subseteq \lambda^{+}$ of full size.

In the
positive direction, Shelah \cite{MR955139} showed that Sierpi{\'n}ski's theorem
is optimal in the sense that the consistency of a measurable cardinal 
is sufficient to get the consistency of $2^{\aleph_0}\rightarrow[\aleph_1]^2_3$. More works in
the positive direction at the level of the first uncountable cardinal $\aleph_1$
include \cite{MR716846,MR4190059}, and the fact that the forcing axiom $\MA$
(Martin's Axiom) implies that the product of any two $ccc$ topological spaces is
again $ccc$. The point is that the Ramsey relation $\kappa\rightarrow(\kappa)^2_2$ naturally reduces
the question of $\kappa$-$cc$ of the product of two spaces to the question of $\kappa$-$cc$ of each factor,
so $\MA$ yields a
consequence of $\aleph_1\rightarrow(\aleph_1)^2_2$, even though the relation itself fails in $\zfc$.

However, this echo of a positive Ramsey relation on $\aleph_{1}$ turned out to be a very  isolated case:
from the \emph{second} uncountable cardinal $\aleph_2$ on, for every successor
cardinal $\kappa$, including successors of singular cardinals, there is a $\kappa$-cc
space whose square violates the $\kappa$-cc (see \cite{paper18} for a comprehensive
account of productivity of chain conditions and colorings).

In the positive direction, Mitchell
and Silver \cite{MR313057} proved that the consistency of a weakly compact
cardinal is necessary and sufficient for the higher analog of K\"onig's lemma to
hold at the level of $\aleph_2$.

\medskip In summary, the higher one goes on the scale of the Alephs, Ramsey
theory steers away from Ramsey's original theorem towards strong negative
relations. The consistency of positive relations on uncountable cardinals is rare
and requires large cardinal strength.

\subsection{Ramsey relation over partitions} Recently, a new, weaker type of Ramsey relations \emph{over partitions} was discovered \cite{strongcoloringpaper}.
Given a partition $p:[\kappa]^{2}\rightarrow\mu$ of the unordered pairs from $\kappa$ into $\mu$ cells,
it is possible  to relax  the notion of homogeneity to \emph{relative} homogeneity over $p$. Declare a set
$A\subseteq \kappa$ as \emph{homogeneous over $p$} for a coloring $c:[\kappa]^{2}\rightarrow\theta$, or
\emph{$(p,c)$-homogeneous} for short, if all pairs from $A$ which lie in any
single $p$-cell are colored by one color which depends on the cell. More formally, for every $j<\mu$,
the set $\{c(\alpha,\beta)\mid(\alpha,\beta)\in [A]^{2}\ \&\ p(\alpha,\beta)=j\}$ has size no more than $1$.

The
standard positive Ramsey relation $\kappa\rightarrow(\lambda)^{2}_{\theta}$ can now be relaxed, for a
partition $p$, to its ``over $p$'' version, $\kappa\rightarrow_{p}(\lambda)^{2}_{\theta}$, to mean that for
every coloring $c:[\kappa]^{2}\rightarrow\theta$ there is a set $A\subseteq \kappa$ of size $\lambda$ which is
\emph{homogeneous over $p$}. Clearly, if $A$ is homogeneous for $c$, it is also
$(p,c)$-homogeneous for every partition $p$ of the pairs from $\kappa$, so
$\kappa\rightarrow_{p} (\lambda)^{2}_{\theta}$ follows
from $\kappa\rightarrow(\lambda)^{2}_{\theta}$ for any partition $p$, but it is feasible that for some $p$, $\kappa\rightarrow_{p} (\lambda)^{2}_{\theta}$, even in situations where $\kappa\nrightarrow (\lambda)^{2}_{\theta}$.

When $A\subseteq\kappa$ is
homogeneous over $p$, the $p$-cell $p(\alpha,\beta)$ of a pair from $A$ determines its
color $c(\alpha,\beta)$, so there is a function $\tau:\mu\rightarrow\theta$ such that $c(\alpha,\beta)=\tau(p(\alpha,\beta))$ for
every $(\alpha,\beta)\in [A]^{2}$.

Similarly, when putting $p$ in the weaker square brackets Ramsey relation
$\kappa\rightarrow[\lambda]^{2}_{\theta}$, the relation $\kappa\rightarrow_{p} [\lambda]^{2}_{\theta}$ means that for every
coloring $c:[\kappa]^{2}\rightarrow\theta$ there is a set $A\subseteq \kappa$ of cardinality $\lambda$ such that at
least one color from $\theta$ is \emph{omitted} by $c$ in every $p$-cell intersected
with $[A]^{2}$. In this case there will be a function $\tau:\mu\rightarrow\theta$
such that
$c(\alpha,\beta)\not=\tau(p(\alpha,\beta))$ for every $(\alpha,\beta)\in [A]^{2}$.

In \cite{strongcoloringpaper} it was indeed shown that
$\aleph_{1}\rightarrow_{p}[\aleph_{1}]^{2}_{\aleph_{1}}$ is consistent in a $ccc$ forcing extension for some generic  partition
$p:[\aleph_{1}]^{2}\rightarrow \aleph_{0}$. In other words, although $\aleph_1\nrightarrow[\aleph_{1}]^{2}_{\aleph_{1}}$
holds absolutely, by Todor{\v{c}}evi{\'c}'s theorem, it is consistent that for some
countable partition $p$ the opposite, positive Ramsey relation
$\aleph_{1}\rightarrow_p[\aleph_{1}]^{2}_{{\aleph_{1}}}$ holds.

This consistency result led to many interesting questions. Primarily, whether
$\MA$ decides Ramsey theory over countable partitions on
$\aleph_1$, and, of course, which way it decides it if it does. These problems were
stated in \cite{strongcoloringpaper} and are solved as a corollary of the results in this
paper.

\subsection{The results}  First, combinatorial properties of partitions are
listed and it is shown that: (a) partitions which satisfy those properties
exist in every model of $\zfc$; (b) $\MA$ implies that positive Ramsey relations hold over every partition
with these properties.
While the consistency results in \cite{strongcoloringpaper} rely on partitions
that are generic, in the forcing sense, the partitions considered here will be
definable from an $\omega_1$ sequence of reals.

The surprising point, though, is that this consistencey result on $\aleph_{1}$ is actually a
special case of a more general theorem which applies to every successor of a
regular cardinal via the Generalized Martin's Axiom ($\gma$). In other words,
the consistency of positive Ramsey relations over partitions does not stop
at $\aleph_1$, as is the case with classical positive Ramsey relation.

But even more is true. It is consistent that not only does every strong coloring
omit at least one color in every $p$-cell of some set $X\s \lambda^+$ of full size,
but actually the positive rounded brackets Ramsey relation $\lambda^{+}\rightarrow_{p} (\lambda^{+})^{2}_{\lambda}$ holds over a suitable partition $p:[\lambda^+]^2\rightarrow\lambda$ for an even stronger reason:
 $\lambda^+$ is the union of $\lambda$ many $(p,c)$-homogeneous sets for every coloring $c:[\lambda^+]^2\rightarrow\lambda$.

\begin{thma} 
\begin{enumerate}
\item If $\gma_{\lambda^+}$ holds for a cardinal
  $\lambda=\lambda^{<\lambda}$ then there is a partition
  $p:[\lambda^+]^2\rightarrow\lambda$ such that for
  every coloring $c:[\lambda^+]^2\rightarrow\lambda$, there is a
  decomposition $\lambda^+=\biguplus_{i<\lambda}X_i$ such that for all
  $i,j<\lambda$,
  $$|\{ c(\alpha,\beta)\mid (\alpha,\beta)\in[ X_i]^2\ \&\   p(\alpha,\beta)=j\}|=1.$$
\item If $\MA_{\aleph_1}(K)$ holds then for every partition
  $p:[\aleph_1]^2\rightarrow\aleph_0$ with finite-to-one fibers, 
  for every coloring $c:[\aleph_1]^2\rightarrow\aleph_0$ there is
  decomposition $\aleph_1=\biguplus_{i<\omega}X_i$ such that for all
  $i,j<\omega$, $$\{ c(\alpha,\beta)\mid (\alpha,\beta)\in[ X_i]^2\ \&\   p(\alpha,\beta)=j\}\text{ is finite}.$$
\end{enumerate}
\end{thma}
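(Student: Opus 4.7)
The plan for both clauses is a forcing argument in which a natural ``homogenizing'' poset $\mathbb{P}_c$ associated to the coloring $c$ is shown to satisfy the chain condition hypothesis of the relevant forcing axiom, and a sufficiently generic filter is read off as the desired decomposition in the ground model. This invokes the correspondence advertised in the abstract: anti-Ramsey features of a partition $p$ translate directly into chain conditions of $\mathbb{P}_c$. For clause (1), the partition $p$ must first be constructed so that $\mathbb{P}_c$ is $\lambda^+$-cc for every coloring $c$, a task that is feasible under $\lambda=\lambda^{<\lambda}$.

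Concretely, define $\mathbb{P}_c$ to consist of partial functions $f:\lambda^+\to\lambda$ with $|\dom(f)|<\lambda$ subject to the homogeneity constraint that whenever $\{\alpha,\beta\},\{\alpha',\beta'\}\in[\dom(f)]^2$ satisfy $f(\alpha)=f(\beta)=f(\alpha')=f(\beta')$ and $p(\alpha,\beta)=p(\alpha',\beta')$, then $c(\alpha,\beta)=c(\alpha',\beta')$. Order by reverse inclusion. For each $\alpha<\lambda^+$, the set $D_\alpha=\{f\in\mathbb{P}_c:\alpha\in\dom(f)\}$ is dense: extend $f$ by sending $\alpha$ to a color $i<\lambda$ not in the range of $f$, which creates no same-color pair and hence no violation. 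The poset is $<\!\lambda$-directed closed, since the union of a directed family of $<\lambda$ conditions is again a condition (using $\lambda^{<\lambda}=\lambda$). Applying $\gma_{\lambda^+}$ to $\{D_\alpha:\alpha<\lambda^+\}$ produces $F=\bigcup G$, and setting $X_i=F^{-1}\{i\}$ yields the required decomposition.

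The hard step is the $\lambda^+$-cc of $\mathbb{P}_c$, and this is precisely where the choice of $p$ bites. Given $\{f_\xi:\xi<\lambda^+\}\s\mathbb{P}_c$, a $\Delta$-system reduction (valid since $\lambda^{<\lambda}=\lambda$) followed by standard thinning yields a uniform subfamily; one then seeks $\xi<\xi'$ with $f_\xi\cup f_{\xi'}\in\mathbb{P}_c$. A failure would be a cross pair $(\alpha,\beta)$ with $\alpha\in\dom(f_\xi)$, $\beta\in\dom(f_{\xi'})$, $f_\xi(\alpha)=f_{\xi'}(\beta)=i$, landing in a $p$-cell already witnessed inside some $f_\eta$ by a same-colored pair of conflicting $c$-value. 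Excluding all such failures for some $\xi<\xi'$ is an anti-Ramsey demand on $p$, and constructing $p$ (from a suitable $\lambda^{<\lambda}$ pigeonhole) so that this works uniformly across every coloring $c$ is the main obstacle.

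Clause (2) is the finite-conditions analogue. Take $\mathbb{P}_c$ to consist of finite partial functions $f:\omega_1\to\omega$ subject to a bounded-cell constraint: each $p$-cell, restricted to any single color class of $f$, is required to carry at most $n(f)<\omega$ distinct $c$-values. The finite-to-one fibers of $p$ make extensions available, and, more substantially, they underpin the Knaster property—after a $\Delta$-system reduction, the finitely many cross $p$-cells between two conditions can be controlled by pigeonhole. Then $\MA_{\aleph_1}(K)$ furnishes the filter, finiteness of the bound survives the union, and the decomposition follows. As in clause (1), the crux is matching the combinatorial hypothesis on $p$—here, the finite-to-one requirement—to the appropriate chain condition of $\mathbb{P}_c$; the rest is routine filter-to-decomposition bookkeeping.
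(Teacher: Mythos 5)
Your overall architecture --- a homogenizing poset of partial functions ordered by reverse inclusion, density of the sets $D_\alpha$, and reading the decomposition off a sufficiently generic filter --- is exactly the paper's, and your poset for clause~(1) coincides with the one used there. But there are genuine gaps in both clauses. For clause~(1) you defer the entire content of the theorem: you never construct the partition $p$, never isolate which combinatorial properties of $p$ are needed, and never prove the chain condition, instead declaring it ``the main obstacle.'' The paper resolves this by building (under $\lambda^{<\lambda}=\lambda$) a partition with injective, $\lambda$-almost-disjoint and $\lambda$-Cohen fibers, and then verifying not the $\lambda^+$-cc but the strictly stronger $\lambda^+$-\emph{stationary}-cc demanded by $\gma_{\lambda^+}$ (together with well-metness and greatest lower bounds for short decreasing sequences). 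That verification does not go through a $\Delta$-system at all, but through a club and a regressive map recording, for each condition $f_\delta$: its trace of colors, its restriction below $\delta$, a bound $\xi<\lambda$ on the pairwise intersections of $p$-fibers over $\dom(f_\delta)$ (where almost-disjointness enters), a bound $\mu<\lambda$ on the $p$-values realized inside $\dom(f_\delta)$, and the small set of $\alpha<\delta$ whose $p$-value to some $\beta\in\dom(f_\delta)\setminus\delta$ is at most $\max\{\xi,\mu\}$ (where injectivity enters). A $\Delta$-system-plus-thinning argument alone cannot succeed here: the problematic data are the $c$-values of cross pairs between the non-root parts of two conditions, and these cannot be stabilized by thinning $\lambda^+$ many conditions.

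For clause~(2) the poset you propose is defective. Requiring each condition to carry ``at most $n(f)<\omega$ distinct $c$-values'' per cell per class is automatic for finite conditions, and a bound $n(f)$ that varies with the condition does not survive the union of the generic filter: along the filter $n$ can grow without bound, so a single class and cell in the final decomposition can still receive infinitely many $c$-values. The paper's conditions instead carry a \emph{persistent} finite bound function $f_q:m_q\times m_q\rightarrow\omega$ subject to $c(\alpha,\beta)<f_q(g_q(\alpha),p(\alpha,\beta))$, together with the requirement that every $p$-value realized inside $a_q$ already lies below $m_q$; since extensions must preserve $f_q$ where defined, the union genuinely bounds each cell in each class. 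The Knaster verification then hinges on arranging that cross pairs between two conditions land only in \emph{new} cells $j\ge m$, where the bound is still free to be set large; this is obtained from the fact that finite-to-one fibers yield $\U(\omega_1,\omega_1,\omega,\omega)$ (a free-set argument), and ``controlled by pigeonhole'' does not capture it, since for two fixed conditions the cross pairs can perfectly well land in old cells whose bounds are already exceeded.
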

The main new phenomenon which is revealed here about Ramsey theory over
partitions is, then, that small partitions can consistently have positive Ramsey
relations over them at uncountable cardinals at the presence of $\gma$. Since
the consistency of $\gma$ does not require consistency assumptions beyond that
of $\zfc$, this stands in strong contrast to other characterizations of the
Ramsey properties \cite{MR313057}.

Another interesting discovery is of a duality phenomenon in
the presence of a mild forcing axiom (see Definition~\ref{mak} below): a positive Ramsey relation over $p$, when viewed as a partition, is
equivalent to $p$ witnessing a certain negative Ramsey relation when viewed as a
coloring:

\begin{thmb} 
Suppose $\MA_{\aleph_1}(K)$ holds. Then for every function $p:[\aleph_1]^2\rightarrow\aleph_0$, the following are equivalent:
\begin{enumerate}
\item $\aleph_1\rightarrow_p[\aleph_1]^2_{\aleph_0,\text{finite}}$;
\item There exists $X\in[\aleph_1]^{\aleph_1}$ such that $p\restriction[X]^2$ witnesses $\U(\aleph_1,\allowbreak\aleph_1,\aleph_0,\aleph_0)$.
\end{enumerate}
\end{thmb}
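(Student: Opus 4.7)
The plan is to route both implications through a common finite-approximation forcing notion $\mathbb{P}(c,X)$ whose property $K$ encodes (1) for a given $c$ and whose compatibility structure mirrors the $\U$-content of (2) over $X$. For $c:[\omega_1]^2\to\omega$ and $X\s\omega_1$, let $\mathbb{P}(c,X)$ consist of triples $(q,F,g)$ with $q\in[X]^{<\omega}$, $F\in[\omega]^{<\omega}$, and $g:F\to[\omega]^{<\omega}$, subject to $c(\alpha,\beta)\in g(p(\alpha,\beta))$ whenever $\{\alpha,\beta\}\in[q]^2$ and $p(\alpha,\beta)\in F$; extension is coordinate-wise and preserves old $g$-values. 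A filter meeting the $\aleph_1$ natural dense sets (``$\max q>\eta$'' for $\eta<\omega_1$ and ``$k\in F$'' for $k<\omega$) produces an uncountable $A\s X$ and a finite-fiber $\hat g:\omega\to[\omega]^{<\omega}$ with $c(\alpha,\beta)\in\hat g(p(\alpha,\beta))$ on $[A]^2$, exactly witnessing (1) relative to $c$. Thus, under $\MA_{\aleph_1}(K)$, establishing the positive relation in (1) reduces to establishing that $\mathbb{P}(c,X)$ has property $K$.

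For $(2)\Rightarrow(1)$, fix $c$ and let $X$ be as in (2), and take any $\aleph_1$-sequence of conditions in $\mathbb{P}(c,X)$. A $\Delta$-system reduction together with pigeonhole on the finite data normalises them so that all conditions share a root $r$, a common $(F,g)$, and pairwise-disjoint stems $a_i$ of fixed size exhibiting the same $p$- and $c$-type over $r$. Two such normalised conditions are incompatible only when some cross-pair $(\alpha,\beta)\in a_i\times a_j$ is ``bad'' in the sense that $p(\alpha,\beta)\in F$ yet $c(\alpha,\beta)\notin g(p(\alpha,\beta))$. Assume for contradiction that no $\aleph_1$-subset is pairwise compatible. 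A further pigeonhole fixes a bad cell $k^*\in F$, a bad value $\tau^*\notin g(k^*)$, and the stem-positions at which badness recurs; the plan is to invoke the $\U$-property of $p\restriction[X]^2$, applied to an auxiliary coloring derived from both $p$ and $c$ and chosen so that the positive ``$k^*$-appearance'' delivered by $\U$ encodes the \emph{absence} of $(k^*,\tau^*)$-badness, to extract an uncountable pairwise-compatible subfamily and derive a contradiction. For $(1)\Rightarrow(2)$ I would argue the contrapositive directly in $\zfc$: given that no $X\in[\omega_1]^{\omega_1}$ satisfies (2), each such $X$ supplies a disjoint $\aleph_1$-family of finite subsets and a color $\tau_X$ failing the $\U$-criterion; threading these witnesses into a single coloring $c:[\omega_1]^2\to\omega$ by a diagonal induction of length $\omega_1$ and standard bookkeeping over the countably many cells and $\aleph_1$-many candidate sets, one arranges that for every uncountable $A\s\omega_1$ some $p$-cell contains infinitely many $c$-values on $[A]^2$, so no finite-fiber $\hat g$ can bound $c$ on such an $A$, violating (1).

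The main obstacle I foresee is the Knaster verification in $(2)\Rightarrow(1)$: $\U$ produces configurations in which a designated color positively appears, whereas compatibility of conditions in $\mathbb{P}(c,X)$ requires a designated color to be absent. Bridging this quantifier flip — likely via the correct choice of auxiliary coloring fed into $\U$, re-encoding the bad pattern $(k^*,\tau^*)$ as a single target color whose realisation is precisely the desired non-badness — is where the essential content of the duality must live; the remaining ingredients ($\Delta$-system lemma, pigeonhole on countable data, density analysis of $\mathbb{P}$) are routine.
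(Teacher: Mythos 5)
There are genuine gaps in both directions. For $(2)\Rightarrow(1)$, two problems. First, your single-set forcing $\mathbb{P}(c,X)$ has a density failure that precedes any Knaster question: once $k\in F$ and $g(k)$ is frozen (extensions ``preserve old $g$-values''), the dense set ``$\max q>\eta$'' need not be dense, since it is entirely possible that every $\beta\in X\setminus\eta$ has some $\alpha\in q$ with $p(\alpha,\beta)=k$ and $c(\alpha,\beta)\notin g(k)$; nothing about $\U$ rules this out, because $\U$ only speaks about pairwise disjoint families. The paper's forcing (Theorem~\ref{thm56}) avoids this by forcing a \emph{partition} of $\omega_1$ into countably many pieces: a new ordinal $\beta$ is always placed in a brand-new piece $m'$, so clause (5) is vacuous for the new pairs and density is automatic; one uncountable piece is then extracted at the end. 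Second, the ``quantifier flip'' you flag as the main obstacle is not actually there: by Definition~\ref{uprinciple}, $\U$ applied to the $\Delta$-system of stems with $\delta:=m$ (the common bound on the $p$-values realized inside the old conditions) yields an uncountable subfamily in which \emph{every} cross-pair lands in a cell $j>m$, i.e., a cell on which the bounding function $f$ has not yet been committed; compatibility is then free by extending $f$ to dominate the finitely many new colors. No auxiliary coloring and no analysis of a ``bad cell $k^*$ and bad value $\tau^*$'' is needed, and your sketch of how to bridge the supposed flip does not converge to an argument.

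For $(1)\Rightarrow(2)$, the contrapositive construction you describe cannot be carried out as stated: there are $2^{\aleph_1}$ uncountable subsets of $\omega_1$, so ``bookkeeping over $\aleph_1$-many candidate sets'' in an induction of length $\omega_1$ does not cover all potential witnesses to (1). The missing idea is that no diagonalization is needed: one fixes a \emph{single} coloring $c:[\omega_1]^2\rightarrow\omega$ with finite-to-one fibers (these exist in $\zfc$ by Proposition~\ref{Wildkjsbdkj}) and proves, as a $\zfc$ lemma (Theorem~\ref{thm49}), that whenever $p\restriction[X]^2$ fails $\U(\omega_1,\omega_1,\omega,\omega)$, this one $c$ already has an infinite cell on $[X]^2$: Dushnik--Miller applied to the witnessing disjoint family produces a $<$-increasing $(\omega+1)$-sequence all of whose cross-pairs meet a fixed initial segment $n$ of cells, pigeonhole then gives a single $\beta$, a single cell $j<n$, and infinitely many $\alpha_i<\beta$ with $p(\alpha_i,\beta)=j$, and finite-to-one-ness of the fibers of $c$ forces the values $c(\alpha_i,\beta)$ to be infinite in number. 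Applying (1) to this particular $c$ and reading the lemma contrapositively gives (2) directly, with no use of $\MA$ in this direction.
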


Here $\aleph_1\rightarrow_p[\aleph_1]^2_{\aleph_0,\text{finite}}$ asserts that for every coloring
$c:[\aleph_1]^2\rightarrow\aleph_0$ there exist an uncountable set $X\subseteq \aleph_{1}$ such that  all $j<\omega$ the set
$\{ c(\alpha,\beta)\mid (\alpha,\beta)\in[ X]^2\ \&\ p(\alpha,\beta)=j\}$ is finite, or, almost all colors are
omitted when restricting $c$ to the pairs from $X$ in  any single $p$-cell.
$\U(\aleph_1,\allowbreak\aleph_1,\aleph_0,\aleph_0)$ is a provable instance of the $4$-parameter
anti-Ramsey coloring principle $\U(\ldots)$ due to Lambie-Hanson and Rinot \cite{paper34}.
Its definition is reproduced in Section~\ref{prel}.

\section{Preliminaries}\label{prel}
Throughout the paper, $\kappa$ denotes a regular uncountable cardinal,
$\chi,\theta,\mu$ denote cardinals $\le\kappa$,
and $\lambda$ denotes an infinite cardinal $<\kappa$.
For sets of ordinals $a$ and $b$, we write $a < b$ if $\alpha < \beta$
for all $\alpha \in a$ and  $\beta \in b$.  
For a set $\mathcal{A}$ which is either an ordinal or a collection of sets
of ordinals, we interpret $[\mathcal{A}]^2$ as $\{ (a,b)\in\mathcal A\times\mathcal A\mid a<b\}$.
 This is a convenient means to be able to write $c(\alpha,\beta)$ instead of $c(\{\alpha,\beta\})$.
For an ordinal $\sigma>2$ and a set of ordinals $A$, we write 
$[A]^\sigma$ for $\{ B\s A\mid \otp(B)=\sigma\}$.
For a cardinal
$\chi$ and a set $\mathcal A$, we write  $[\mathcal  A]^{<\chi}:=\{\mathcal B\s\mathcal A\mid |\mathcal B|<\chi\}$.  

\begin{defn}[\cite{paper34}]\label{uprinciple} $\U(\kappa,\kappa,\mu,\chi)$ asserts the existence of a function $p:[\kappa]^2\rightarrow\mu$ such that  
for every $\sigma<\chi$, every pairwise disjoint family $\mathcal A\s[\kappa]^{\sigma}$ of size $\kappa$,
for every $\delta<\mu$, there exists $\mathcal B\s \mathcal A$ of size $\kappa$ such that $\min(p[a\times b])>\delta$ for all $(a,b)\in[\mathcal B]^2$.
\end{defn}
By \cite[Corollary~4.12]{paper34}, for every pair $\mu\le\lambda$ of infinite regular cardinals, $\U(\lambda^+,\lambda^+,\allowbreak\mu,\lambda)$ holds.

\begin{defn} Let $p:[\kappa]^2\rightarrow\mu$ be a partition. Then:
\begin{itemize}
\item $p$ has \emph{injective fibers} iff for all $\alpha<\alpha'<\beta$, $p(\alpha,\beta)\neq p(\alpha',\beta)$;
\item $p$ has \emph{finite-to-one fibers} iff for all $\beta<\kappa$ and $j<\mu$,
$\{ \alpha<\beta\mid p(\alpha,\beta)=j\}$ is finite;
\item $p$ has \emph{$\lambda$-almost-disjoint fibers}  iff  for all $\beta<\beta'<\kappa$:
$$|\{p(\alpha,\beta)\mid \alpha<\beta\}\cap\{ p(\alpha,\beta')\mid \alpha<\beta\}|<\lambda;$$
\item $p$ has \emph{$\lambda$-coherent fibers} iff  for all $\beta<\beta'<\kappa$:
$$|\{\alpha<\beta\mid p(\alpha,\beta)\neq p(\alpha,\beta')\}|<\lambda;$$
\item $p$ has \emph{$\lambda$-Cohen fibers} 
iff for every injection $g:a\rightarrow\mu$ with $a\in[\kappa]^{<\lambda}$,
there are cofinally many $\beta<\kappa$ such that $g(\alpha)=p(\alpha,\beta)$ for all $\alpha\in a$.
\end{itemize}
\end{defn}

\begin{prop}\label{Wildkjsbdkj} Suppose that $\lambda$ is an infinite regular cardinal.
\begin{enumerate}
\item There is a partition $p:[\lambda^+]^2\rightarrow\lambda$
with injective and $\lambda$-coherent fibers;
\item For every cardinal $\chi$ such that $\lambda^{<\chi}=\lambda$, there is a partition $p:[\lambda^+]^2\rightarrow\lambda$
with injective, $\lambda$-almost-disjoint and $\chi$-Cohen fibers.
\end{enumerate}
\end{prop}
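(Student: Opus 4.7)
The plan for both parts is to present the partition as $p(\gamma,\alpha):=e_\alpha(\gamma)$, where $\langle e_\alpha\mid\alpha<\lambda^+\rangle$ is a sequence of injections $e_\alpha\colon\alpha\to\lambda$ built by transfinite recursion on $\alpha$. Injectivity of each $\beta$-fiber of $p$ is then immediate, and the remaining requested properties translate into inductive conditions on the $e_\alpha$'s: $\lambda$-coherence of consecutive fibers in part~(1), and $\lambda$-almost-disjointness of ranges together with $\chi$-Cohen genericity in part~(2).

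For part~(1), I would build a $\lambda$-coherent sequence of injections, meaning $|\{\gamma<\alpha\mid e_\alpha(\gamma)\neq e_\beta(\gamma)\}|<\lambda$ whenever $\alpha<\beta$. Successor stages are routine (extend by a fresh value, modifying by one swap if the range has already exhausted $\lambda$). At a limit $\alpha$ with $\cf(\alpha)<\lambda$, fix a cofinal sequence $\langle\alpha_i\mid i<\cf(\alpha)\rangle$ and set $e_\alpha(\gamma):=e_{\alpha_{i(\gamma)}}(\gamma)$ for the least $i(\gamma)$ with $\gamma<\alpha_{i(\gamma)}$; the error set $\{\gamma<\alpha_j\mid e_\alpha(\gamma)\neq e_{\alpha_j}(\gamma)\}$ is contained in a union of $<\lambda$ many sets of size $<\lambda$, and so by regularity of $\lambda$ is itself of size $<\lambda$. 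A small failure of injectivity on this error set is corrected by reassigning values on $<\lambda$ points. The main obstacle is the case $\cf(\alpha)=\lambda$, where the naive error accumulates to size $\lambda$; I would resolve it by appealing to a canonical $C$-sequence on $\lambda^+$ and running Todor\v{c}evi\'c's walks construction, a standard source of $\lambda$-coherent injective sequences in \zfc.

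For part~(2), I would run a bookkeeping construction. Since $\lambda^{<\chi}=\lambda$ (which, by Cantor, forces $\chi\le\lambda$), Hausdorff's formula yields $(\lambda^+)^{<\chi}=\lambda^+$, so the total number of pairs $(a,g)$ with $a\in[\lambda^+]^{<\chi}$ and $g\colon a\to\lambda$ an injection is $\lambda^+$; enumerate them as $\langle(a_\beta,g_\beta)\mid\beta<\lambda^+\rangle$ so that each pair is listed at a cofinal set of indices. The construction maintains that $\{\operatorname{ran}(e_\gamma)\mid\gamma<\beta\}$ is pairwise $\lambda$-almost-disjoint and that $g_\beta\subseteq e_\beta$ whenever $a_\beta\s\beta$. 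The key combinatorial step is the following AD extension lemma: given a pairwise $\lambda$-almost-disjoint family $\{R_\gamma\mid\gamma<\eta\}$ of subsets of $\lambda$ with $\eta<\lambda^+$, and any $S\in[\lambda]^{<\lambda}$, there exists $R\in[\lambda]^\lambda$ with $S\s R$ and $|R\cap R_\gamma|<\lambda$ for every $\gamma<\eta$. To prove it, discard the $R_\gamma$ of size $<\lambda$ (trivially AD with anything), enumerate the rest as $\langle R_{\gamma_i}\mid i<\lambda\rangle$, and at stage $i<\lambda$ pick $r_i\in R_{\gamma_i}\setminus(\bigcup_{j<i}R_{\gamma_j}\cup S\cup\{r_j\mid j<i\})$; this is possible because $|R_{\gamma_i}\cap\bigcup_{j<i}R_{\gamma_j}|$ is a union of $|i|<\lambda$ sets of size $<\lambda$, hence $<\lambda$ by regularity of $\lambda$. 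Set $R:=S\cup\{r_i\mid i<\lambda\}$. Applying the lemma at each stage $\beta$ with $S:=\operatorname{ran}(g_\beta)$ produces a suitable range $R_\beta$, and one defines $e_\beta\colon\beta\to R_\beta$ as any injection with $e_\beta\restriction a_\beta=g_\beta$. Part~(2) is thus uniform across stages, since $\lambda$-almost-disjointness can be arranged freshly at each stage via the lemma without reference to previous fibers pointwise.
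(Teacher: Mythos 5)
Your proposal is correct and follows essentially the same route as the paper: part~(1) ultimately rests, as does the paper's proof, on citing Todor\v{c}evi\'c's walks machinery for the existence of a $\lambda$-coherent sequence of injections (your recursive sketch genuinely cannot be completed at points of cofinality $\lambda$ without it), and part~(2) combines the same cofinal enumeration of the $<\chi$-sized injections with an almost-disjoint family of candidate ranges, padded out to full injective fibers. The only difference is cosmetic: the paper fixes a $\lambda^+$-sized almost-disjoint family $\vec Z\s[\lambda]^\lambda$ in advance and shifts into $Z_\beta$ above $\sup(\im(g_\beta))$, whereas you build the almost-disjoint ranges greedily stage by stage via the standard diagonal argument.
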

\begin{proof}  
(1) See, for instance, \cite[Lemma~6.25]{TodWalks}.

(2) Assuming $\lambda^{<\chi}=\lambda$, fix an enumeration $\langle g_\beta\mid \beta<\lambda^+\rangle$
of all injections $g$ with $\dom(g)\in [\lambda^+]^{<\chi}$ and $\im(g)\s \lambda$
in which each such injection occurs cofinally often.
For each $\beta<\lambda^+$, let $\gamma_\beta:=\sup(\im(g_\beta))+1$.

Fix a sequence $\vec Z=\langle Z_\beta\mid \beta<\lambda^+\rangle$ of elements of $[\lambda]^\lambda$
such that, for all $\alpha<\beta<\lambda^+$, $|Z_\alpha\cap Z_\beta|<\lambda$.
For all $\beta<\lambda^+$ and $\iota<\lambda$, let $Z_\beta(\iota)$ denote the unique $\zeta\in Z_\beta$ to satisfy $\otp(Z_\beta\cap\zeta)=\iota$.
For every $\beta<\lambda^+$, fix an injection $i_\beta:\beta\rightarrow\lambda$.
Then, define a partition $p:[\lambda^+]^2\rightarrow\lambda$ via:
$$p(\alpha,\beta):=\begin{cases}
g_{\beta}(\alpha)&\text{if }\alpha\in\dom(g_{\beta});\\
Z_\beta({\gamma_\beta}+i_\beta(\alpha))&\text{otherwise}.\end{cases}$$

A moment's reflection makes it clear that  $p$ has injective and $\chi$-Cohen fibers. 

To see that $p$ is $\lambda$-almost-disjoint,
fix an arbitrary pair $(\beta,\beta')\in[\lambda^+]^2$ and consider the set
$$A:=\{p(\alpha,\beta)\mid \alpha<\beta\}\cap\{p(\alpha,\beta')\mid \alpha<\beta\}.$$
Clearly, $|A|\le|g_{\beta}|+|g_{{\beta'}}|+|Z_\beta\cap Z_{\beta'}|<\lambda$, as sought.
\end{proof}

\begin{defn}\label{zp}
Let $p:[\kappa]^2\rightarrow\mu$ be a partition
and $c:[\kappa]^2\rightarrow\theta$ be a coloring.
\begin{itemize}
 \item  A subset $A\s\kappa$ is \emph{$(p,c)$-homogeneous} iff there exists a function
$\tau:\mu\rightarrow\theta$ such that  $c(\alpha,\beta)=\tau(p(\alpha,\beta))$
 for all $(\alpha,\beta)\in [A]^2$;
 \item   The \emph{$p$-cochromatic number of $c$}, denoted $z_p(c)$, is the least cardinal $\zeta$ such that $\kappa$ may be covered by $\zeta$ many $(p,c)$--homogeneous sets.
 \end{itemize}
\end{defn}

\section{Relations over partitions from forcing axioms}\label{forcingaxioms}
Our forcing convention is that $q\le p$ means that $q$ is a forcing condition
which extends the forcing condition $p$.
A notion of forcing $\mathbb Q$ has Knaster's Property (Property~K) iff for every uncountable set $A$ of conditions in $\mathbb Q$,
there is an uncountable $B\subseteq A$ such that any two elements of $B$ are compatible.

\begin{defn}\label{mak} $\MA_{\aleph_1}(K)$ asserts that for every notion of forcing $\mathbb Q$ having Property~K,
for every sequence $\langle D_\beta\mid \beta<\omega_1\rangle$ of dense subsets of $\mathbb Q$,
there is a filter $G$ over $\mathbb Q$ that meets each of the $D_\beta$'s.
\end{defn}

A notion of forcing $\mathbb Q=(Q,{\le})$ is \emph{well-met} iff every
pair $q_0,q_1$ of compatible conditions has a greatest lower bound,
i.e., an $r\le q_0,q_1$ such that for any condition $s$, if
$s\le q_0,q_1$ then $s\le r$.  The notion of forcing $\mathbb Q$ is said to
satisfy the \emph{$\lambda^+$-stationary chain condition}
($\lambda^+$-stationary-cc, for short) iff for every sequence $\langle q_\delta\mid \delta<\lambda^+\rangle$ of conditions in $\mathbb Q$
there is a club $D\s\lambda^+$ and a regressive map $h:D\cap E^{\lambda^+}_{\cf(\lambda)}\rightarrow\lambda^+$ such that for all
$\gamma,\delta\in\dom(h)$, if  $h(\gamma)=h(\delta)$ then $q_\gamma$
and $q_\delta$ are compatible.

\begin{defn}[Generalized Martin's Axiom]\label{defGMA}
  $\gma_{\lambda^+}$ asserts that for every notion of forcing $\mathbb Q=(Q,{\le})$ of size $<2^\lambda$ which satisfies the following conditions:
\begin{enumerate}
\item[(a)] $\mathbb Q$ is well-met;
\item[(b)] For all $\sigma<\lambda$, every $\le$-decreasing sequence
  of conditions $\langle q_i\mid i<\sigma\rangle$ in $\mathbb Q$  admits a greatest
  lower bound;
\item[(c)] $\mathbb Q$ satisfies the $\lambda^+$-stationary-cc,
\end{enumerate}
for every sequence $\langle D_\beta\mid \beta<\lambda^+\rangle$ of dense subsets of $\mathbb Q$
there is a filter $G$ over $\mathbb Q$ that meets each of the $D_\beta$'s.
\end{defn}

By Fodor's lemma, any poset satisfying the $\omega_1$-stationary-cc has Property~$K$.
Thus, $\MA_{\aleph_1}\implies\MA_{\aleph_1}(K)\implies\gma_{\aleph_1}$.

\begin{thm}[Shelah, \cite{Sh:80}] Suppose the  $\gch$ holds.  Then for any prescribed regular cardinal $\lambda$
there is a cofinality-preserving forcing extension in which
$\lambda^{<\lambda}=\lambda$ and $\gma_{\lambda^+}$ holds.
\end{thm}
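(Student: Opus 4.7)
The plan is to adapt the Solovay--Tennenbaum construction of $\MA$ to the higher-cardinal setting via a $<\lambda$-support iteration of length $\lambda^{++}$. Under $\gch$ one automatically has $\lambda^{<\lambda}=\lambda$, $2^\lambda=\lambda^+$, and $2^{\lambda^+}=\lambda^{++}$. I would iterate $\langle \mathbb{P}_\alpha,\dot{\mathbb{Q}}_\alpha\mid\alpha<\lambda^{++}\rangle$ with $<\lambda$-supports, where each $\dot{\mathbb{Q}}_\alpha$ is forced to be a poset of size $<2^\lambda$ satisfying (a)--(c) of Definition~\ref{defGMA} in $V^{\mathbb{P}_\alpha}$. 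An inductive size bound $|\mathbb{P}_\alpha|\leq\lambda^+$ yields only $\lambda^{++}$ many nice names for such posets, so a bookkeeping bijection $\lambda^{++}\leftrightarrow\lambda^{++}\times\lambda^{++}$ arranges that every such name, paired with every $\lambda^+$-indexed family of dense-set names, is visited cofinally along the iteration.

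The key lemma is that $\mathbb{P}_{\lambda^{++}}$ is $<\lambda$-closed and satisfies the $\lambda^+$-stationary-cc. Closure is immediate from the $<\lambda$-supports together with clauses (a) and (b), which let me take pointwise greatest lower bounds of $<\lambda$-decreasing sequences of conditions. For the chain condition, given $\langle q_\delta\mid\delta<\lambda^+\rangle$ I would first thin to a club on which the supports $\operatorname{supp}(q_\delta)\in[\lambda^{++}]^{<\lambda}$ form a $\Delta$-system with root $r$. On points of cofinality $\cf(\lambda)$ in that club I would define $h(\delta)$ to code $q_\delta\restriction r$; by $\gch$ there are at most $\lambda^+$ possible codes, and a further Fodor-style thinning makes $h$ regressive. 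Well-met-ness of each factor then ensures that any two conditions with identical $h$-value can be amalgamated coordinatewise on $r$ while remaining independent on their private parts, hence are compatible.

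The main obstacle is to genuinely propagate the $\lambda^+$-stationary-cc through the iteration rather than merely verify it at the top. At successor stages, a quotient argument combines the regressive map coming from $\dot{\mathbb{Q}}_\alpha$ in $V^{\mathbb{P}_\alpha}$ with the inductively given regressive map for $\mathbb{P}_\alpha$. At limits of cofinality $\lambda^+$, a reflection argument is needed to recover a common club and assemble a regressive function from those at cofinally many intermediate stages; this is the technical heart of \cite{Sh:80}, and it is precisely the point at which the \emph{stationary}-cc, rather than a plain $\lambda^+$-cc, buys enough uniformity to survive the limit. $\gch$ is used essentially both to keep the coding values below $\lambda^+$ and to ensure that the bookkeeping enumeration has length exactly $\lambda^{++}$, so that every relevant forcing notion in the final model is indeed hit by the iteration.
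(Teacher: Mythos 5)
The paper does not prove this theorem; it is quoted from Shelah \cite{Sh:80}, so your proposal can only be judged on its own terms. Your overall strategy --- a $<\lambda$-support, Solovay--Tennenbaum-style iteration of length $\lambda^{++}$ with bookkeeping, using $<\lambda$-closure from clauses (a) and (b) and proving a preservation lemma for the $\lambda^+$-stationary-cc --- is indeed the right one, and your accounting of where $\gch$ enters is correct. But there are two genuine gaps. First, your verification of the $\lambda^+$-stationary-cc is structurally wrong as written. The stationary-cc (Definition~\ref{defGMA}) demands that for an \emph{arbitrary} sequence $\langle q_\delta\mid\delta<\lambda^+\rangle$ you produce a club $D$ and a regressive $h$ on $D\cap E^{\lambda^+}_{\cf(\lambda)}$; you are not permitted to ``thin to a club on which the supports form a $\Delta$-system'' --- the $\Delta$-system lemma extracts a subfamily (at best a stationary set, via Fodor), never a club, and discarding conditions is not allowed here. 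Likewise, ``a further Fodor-style thinning makes $h$ regressive'' inverts the logic: Fodor is applied \emph{to} a regressive map, and thinning to a set where $h$ is constant would only yield a precaliber-type conclusion, which is strictly weaker than the stationary-cc and is exactly the property that fails to survive iteration. The correct pattern is the one the paper itself uses in Theorem~\ref{thm51}: take $D$ to be the club of closure points of a fixed enumeration of all possible ``traces,'' and let $h(\delta)$ be the index, below $\delta$, coding the relevant trace of $q_\delta$.

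Second, even with that repaired, your key lemma is asserted rather than proved, and it is the entire content of the theorem. Since supports live in $\lambda^{++}$ rather than $\lambda^+$, two conditions with the same $h$-value can still share support coordinates outside anything coded ``below $\delta$''; amalgamating there requires invoking the stationary-cc of each iterand via its own club and regressive map and composing $\lambda^+$ many such data into a single regressive map on $\lambda^+$ --- this, together with the limit stages of cofinality $\lambda^+$ that you flag, is precisely the technical heart of \cite{Sh:80} and cannot be dispatched by ``well-met-ness ensures coordinatewise amalgamation.'' A further point you do not address: the bookkeeping must catch each poset $\mathbb Q$ of the final model at an intermediate stage $V[G_\alpha]$ \emph{at which it already satisfies} clauses (a)--(c), and the $\lambda^+$-stationary-cc is not obviously inherited by inner models, since the witnessing club and regressive map produced in $V[G]$ need not lie in $V[G_\alpha]$. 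As it stands, your text is a correct road map to Shelah's proof, not a proof.
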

\begin{remark}
The conjunction of $\lambda^{<\lambda}=\lambda$ and
$\gma_{\lambda^+}$ implies that $2^\lambda>\lambda^+$.  
Otherwise,  fix an enumeration
$\langle f_\beta\mid\beta<\lambda^+\rangle$ of ${}^\lambda\lambda$ and appeal to
$\gma_{\lambda^+}$ with $\mathbb Q:=\add(\lambda,1)$ and
$D_\beta:=\{q\in{}^{<\lambda}\lambda\mid q\nsubseteq f_\beta\}$ for each
$\beta<\lambda^+$.
\end{remark}

Together with Proposition~\ref{Wildkjsbdkj}(2),
the next result is Clause~(1) of Theorem~A.

\begin{thm}\label{thm51}  Suppose $\lambda^{<\lambda}=\lambda$ and $\gma_{\lambda^+}$ holds.
Let $p:[\lambda^+]^2\rightarrow\lambda$ be any partition with
injective and $\lambda$-almost-disjoint fibers.

For every coloring $c:[\lambda^+]^2\rightarrow\lambda$,
there exists a decomposition $\lambda^+=\biguplus_{i<\lambda}X_i$ such that for all $i<\lambda$:
\begin{itemize}
\item $X_i$ is $(p,c)$-homogeneous (recall Definition~\ref{zp});
\item if $p$ has in addition $\lambda$-Cohen fibers, then $|X_i|=\lambda^+$ and $p[[X_i]^2]=\lambda$.
\end{itemize}

In particular, $z_p(c)\le\lambda$ for every coloring $c:[\lambda^+]^2\rightarrow\lambda$.
\end{thm}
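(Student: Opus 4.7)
The plan is to apply $\gma_{\lambda^+}$ to the obvious homogenizing forcing. Let $\mathbb Q$ consist of all functions $q:a\rightarrow\lambda$ with $a\in[\lambda^+]^{<\lambda}$ such that $q^{-1}(i)$ is $(p,c)$-homogeneous for every $i<\lambda$, ordered by reverse inclusion. A generic filter $G$ produces $F:=\bigcup G$, and setting $X_i:=F^{-1}(i)$ will yield the desired decomposition.

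First I verify the hypotheses of Definition~\ref{defGMA}. Since $\lambda^{<\lambda}=\lambda$, we have $|\mathbb Q|=\lambda^+$, which is strictly less than $2^\lambda$ by the Remark following Shelah's theorem. If $q_0,q_1$ are compatible via some $r\le q_0,q_1$, then $q_0\cup q_1\s r$ inherits $(p,c)$-homogeneity color class by color class, so it is itself a condition and in fact a greatest lower bound, giving (a). For (b), the union of a $\le$-decreasing sequence of length $\sigma<\lambda$ has domain of size $<\lambda$ by regularity of $\lambda$, and $(p,c)$-homogeneity on each color class is preserved under increasing unions.

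The heart of the matter---and what I expect to be the main obstacle---is verifying the $\lambda^+$-stationary chain condition. Given $\langle q_\delta\mid\delta<\lambda^+\rangle$, set $a_\delta:=\dom(q_\delta)$. A routine closing-off produces a club $D\s\lambda^+$ such that for every $\delta\in D\cap E^{\lambda^+}_{\cf(\lambda)}$, $a_\delta\cap\delta$ is bounded below $\delta$. Shrinking to a stationary subset and applying the $\Delta$-system lemma (permitted because $\lambda$ is regular with $\lambda^{<\lambda}=\lambda$), combined with pigeon-hole on the $<\lambda$-sized restriction of $q_\delta$ to the root, I arrange that the $a_\delta$'s form a $\Delta$-system with common root $r$ and with $q_\delta\restriction r$ constant; the regressive map $h$ is then taken to encode this root-data together with the ``shape below $\delta$'' of $q_\delta$, which sits below $\delta$ in view of $\lambda<\delta$ and $\lambda^{<\lambda}=\lambda$. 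Given $h(\gamma)=h(\delta)$, compatibility reduces to verifying that $q_\gamma\cup q_\delta$ is again a condition. Only ``cross pairs''---pairs $(\alpha,\beta)$ with $\alpha\in a_\gamma\setminus r$, $\beta\in a_\delta\setminus r$ (say with $a_\gamma\setminus r<a_\delta\setminus r$) receiving a common color $i$---can cause trouble, since such a pair could share a $p$-value with an internal pair of $q_\gamma^{-1}(i)\cup q_\delta^{-1}(i)$ while carrying a different $c$-value. Here the $\lambda$-almost-disjoint fibers of $p$ bound, for each $\beta\in a_\delta\setminus r$, the set of $p$-values appearing in the $\beta$-column that overlap values from any other already-used column, while injectivity rules out intra-column collisions; a further thinning, carried out one $i$ at a time, should extract a stationary set on which pairwise compatibility holds.

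Finally, apply $\gma_{\lambda^+}$ against the dense sets $D_\beta:=\{q\mid\beta\in\dom(q)\}$ for $\beta<\lambda^+$; density is immediate since any unused color $i$ may be assigned to $\beta$. This already gives the basic decomposition. For the stronger Cohen-fibers conclusion, include the families $\{q\mid(\exists\beta>\alpha)\,q(\beta)=i\}$ (to force $|X_i|=\lambda^+$) and $\{q\mid(\exists\alpha<\beta\in q^{-1}(i))\,p(\alpha,\beta)=j\}$ (to force $p[[X_i]^2]=\lambda$) for every relevant $i,j,\alpha$; density of these reduces to prescribing an injective $p$-pattern using $\lambda$-Cohen fibers, where the pattern is chosen to use $p$-values disjoint from those already appearing on $q^{-1}(i)$ so that no $(p,c)$-homogeneity conflict is created.
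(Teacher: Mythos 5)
Your forcing is the same as the paper's (conditions are partial functions $a\to\lambda$, $a\in[\lambda^+]^{<\lambda}$, whose colour classes are $(p,c)$-homogeneous), and your treatment of clauses (a), (b), the size bound, and the dense sets (including the two extra families in the $\lambda$-Cohen case) matches the paper's. The problem is the $\lambda^+$-stationary-cc, which is indeed the heart of the matter, and there your sketch has a genuine gap in two respects. First, a methodological one: the $\lambda^+$-stationary-cc demands a club $D$ and a regressive $h$ on all of $D\cap E^{\lambda^+}_{\cf(\lambda)}$, \emph{every} fiber of which is linked. ``A further thinning\ldots should extract a stationary set on which pairwise compatibility holds'' is aiming at the wrong target: a single linked stationary subfamily does not produce such an $h$ (by Fodor you cannot hide the discarded stationary remainder in injective $h$-values), and iterating a thinning ``one $i$ at a time'' over $\lambda$ many colours is not an operation that obviously preserves a club. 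Every case distinction has to be absorbed into the definition of $h(\delta)$ as an ordinal below $\delta$, which is what the paper does via a fixed enumeration of quintuples and a club on which that enumeration has caught up.

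Second, and more substantively, the mechanism that kills a collision between a cross pair and an \emph{internal} pair is missing. You correctly identify this as the dangerous case, but ``almost-disjointness bounds the overlapping $p$-values, injectivity rules out intra-column collisions'' does not resolve it. What is actually needed is to encode into $h(\delta)$, besides $q_\delta\restriction\delta$ and the isomorphism type of the triples $(i,j,c(\alpha,\beta))$: a bound $\mu<\lambda$ on all $p$-values of pairs internal to $\dom(q_\delta)$; a bound $\xi<\lambda$ on the pairwise intersections of the $p$-fibers of elements of $\dom(q_\delta)$ (this is where $\lambda$-almost-disjointness enters); and, crucially, an ordinal $\epsilon<\delta$ with
$$\{\alpha<\delta\mid \exists\beta\in\dom(q_\delta)\setminus\delta\ [\,p(\alpha,\beta)\le\max\{\xi,\mu\}\,]\}\s\epsilon,$$
which exists because injective fibers make this set of size $<\lambda$ while $\cf(\delta)=\lambda$. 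Only with this last clause does $h(\gamma)=h(\delta)$ (for $\gamma<\delta$) force every cross pair $(\alpha,\beta)$ with $\gamma\le\alpha<\delta\le\beta$ to satisfy $p(\alpha,\beta)>\max\{\xi,\mu\}$, so that any offending collision must be between \emph{two} cross pairs; almost-disjointness then forces them to share their upper coordinate, and injectivity collapses them to a single pair, a contradiction. Your sketch names the right hypotheses but never assembles this chain, and without the $\epsilon$-clause the argument does not close.
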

\begin{proof} Fix an arbitrary coloring $c:[\lambda^+]^2\rightarrow\lambda$.
Define a notion of forcing $\mathbb Q=(Q,\supseteq)$, where $Q$
consists of all functions $f:a\rightarrow\lambda$ such that:
\begin{enumerate}
\item $a\in[\lambda^+]^{<\lambda}$;
\item \label{Cauhku64h} for all $i,j<\lambda$, the set
$$\Gamma_{i,j}^f:=\{ c(\alpha,\beta)\mid (\alpha,\beta)\in[a]^2,\ f(\alpha)=i=f(\beta),\ p(\alpha,\beta)=j\}$$
contains at most one element.
\end{enumerate}
It will be shown that $\mathbb Q$ satisfies the requirement of Definition~\ref{defGMA}.
We first dispose of an easy claim.

\begin{claim}\label{claim811} Let $\mathcal F$ be a centered family of conditions of size $<\lambda$. Then $\bigcup\mathcal F$ is a condition.
\end{claim}
\begin{proof} Suppose not. Denote $f:=\bigcup\mathcal F$ and $a:=\dom(f)$.
As $a\in[\lambda^+]^{<\lambda}$ this must mean that there are  $i,j<\lambda$ for which the set $\Gamma_{i,j}^f$ has more than one element.
This means that we may pick $(\alpha_0,\beta_0),(\alpha_1,\beta_1)\in[a]^2$ such that:
\begin{itemize}
\item $f(\alpha_0)=f(\alpha_1)=i=f(\beta_0)=f(\beta_1)$;
\item $p(\alpha_0,\beta_0)=j=p(\alpha_1,\beta_1)$;
\item  $c(\alpha_0,\beta_0)\neq c(\alpha_1,\beta_1)$.
\end{itemize}
Fix $f_0,f_1,f^0,f^1\in\mathcal F$ such that $\alpha_0\in\dom(f_0)$, $\alpha_1\in\dom(f_1)$, $\beta_0\in\dom(f^0)$ and $\beta_1\in\dom(f^1)$.
Since $\mathcal F$ is centered, there exists a condition $q$ such that $f_0\cup f_1\cup f^0\cup f^1\s q$.
A moment's reflection makes it clear that $q(\alpha_0)=q(\alpha_1)=i=q(\beta_0)=q(\beta_1)$,
so since $p(\alpha_0,\beta_0)=j=p(\alpha_1,\beta_1)$ and since $q$ is a legitimate condition,
this must mean that $c(\alpha_0,\beta_0)=c(\alpha_1,\beta_1)$. This is a contradiction.
\end{proof}

\begin{claim} For every $\beta<\lambda^+$, $D_\beta:=\{ f\in Q\mid \beta\in\dom(f)\}$ is dense in $\mathbb Q$.
\end{claim}
\begin{proof} Let $\beta<\lambda^+$. Given any condition $f$ such that $\beta\notin\dom(f)$,
we get that $f':=f\cup\{(\beta,\sup(\im(f))+1)\}$ is an extension of $f$ in $D_\beta$.
\end{proof}

It thus follows that if $G$ is a filter over $\mathbb Q$ that meets each of the $D_\beta$'s,
then $g:=\bigcup G$ is a function from $\lambda^+$ to $\lambda$ such that
for every $i<\lambda$,
if we let $X_i:=\{ \alpha<\lambda^+\mid g(\alpha)=i\}$, then
$|\{ c(\alpha,\beta)\mid (\alpha,\beta)\in [X_i]^2\ \&\ p(\alpha,\beta)=j\}|\le 1$ 
for all $j<\lambda$. Thus, $z_p(c)\le \lambda$ holds in the
forcing extension by $\mathbb Q$. 

\begin{claim} If $p$ has $\lambda$-Cohen fibers, then, for all $\epsilon<\lambda^+$
and $i,j<\lambda$:
\begin{enumerate}
\item  
$D^\epsilon:=\{ f\in Q\mid \exists \beta\in\dom(f)\,(\beta\ge\epsilon\ \&\ f(\beta)=i)\}$ is dense;
\item 
 $D_{i,j}:=\{f\in Q\mid \exists(\alpha,\beta)\in[\dom(f)]^2, f(\alpha)=i=f(\beta)\ \&\ p(\alpha,\beta)=j\}$ is dense.
\end{enumerate}
\end{claim}
\begin{proof} Suppose that $p$ has $\lambda$-Cohen fibers.

(1) Given $\epsilon<\lambda^+$ and a condition $f:a\rightarrow\lambda$,
fix a large enough $\eta<\lambda^+$ such that $\{ p(\alpha,\beta)\mid (\alpha,\beta)\in[a]^2\}\s\eta$, 
and then define an injection $g:a\rightarrow\lambda$ via
$$g(\alpha):=\eta+\otp(a\cap \alpha).$$
Now, as $p$ has $\lambda$-Cohen fibers, we may find some $\beta<\lambda^+$ with $a\cup \epsilon\s\beta$
such that $p(\alpha,\beta)=g(\alpha)$ for all $\alpha\in a$.
It is clear that $f':=f\cup\{(\beta,i)\}$ is an extension of $f$ lying in $D^\epsilon$.

(2) Given $i,j<\lambda^+$ and a condition $f:a\rightarrow\lambda$, we do the following.
First, by Clause~(1), we may assume the existence of some $\alpha^*\in a$ such that $f(\alpha^*)=i$.
Of course, if $f\in D_{i,j}$, then we are done,
thus, hereafter, assume that $f\notin D_{i,j}$.

Fix a large enough $\eta\in\lambda^+\setminus(j+1)$ such that $\{ p(\alpha,\beta)\mid (\alpha,\beta)\in[a]^2\}\s\eta$.
Define an injection $g:a\rightarrow\lambda$ via
$$g(\alpha):=\begin{cases}
j&\text{if }\alpha=\alpha^*;\\
\eta+\otp(a\cap \alpha)&\text{otherwise}.
\end{cases}$$

Now, as $p$ has $\lambda$-Cohen fibers, we may find some  $\beta<\lambda^+$ with $a\s\beta$
such that $p(\alpha,\beta)=g(\alpha)$ for all $\alpha\in a$.
As $f\not\in D_{i,j}$, it immediately follows that 
$f':=f\cup\{(\beta,i)\}$ is a legitimate condition lying in $D_{i,j}$. So we are done.
\end{proof}

Clearly, $\mathbb Q$ has size no more than
$|[\lambda^+]^{<\lambda}|=\lambda^+<2^\lambda$. In addition, by
Claim~\ref{claim811}, Clauses (a) and (b) of Definition~\ref{defGMA}
hold true.  Thus, to complete the proof, we are left with addressing
Clause (c) of Definition~\ref{defGMA}.  To this end, assume we are
given a sequence $\langle f_\delta\mid \delta<\lambda^+\rangle$ of
conditions in $\mathbb Q$; we need to find a club $D\s\lambda^+$ and a
regressive map $h:D\cap E^{\lambda^+}_\lambda\rightarrow\lambda^+$
such that for all $\gamma,\delta\in D\cap E^{\lambda^+}_\lambda$, if 
$h(\gamma)=h(\delta)$ then  $f_\gamma$ and  $f_\delta$ are compatible.

Consider the club $C:=\{\delta<\lambda^+\mid \forall\gamma<\delta\,(\sup(\dom(f_\gamma))<\delta)\}$.
Fix an injective enumeration $\langle (\psi_\tau,\varphi_\tau,\xi_\tau,\mu_\tau,\epsilon_\tau)\mid \tau<\lambda^+\rangle$ of
$[\lambda^3]^{<\lambda}\times[\lambda^+\times\lambda]^{<\lambda}\times \lambda\times\lambda\times \lambda^+$,
and then consider the subclub:
$$D:=\{\delta\in C\mid  \{(\psi_\tau,\varphi_\tau,\xi_\tau,\mu_\tau,\epsilon_\tau)\mid \tau<\delta\}=[\lambda^3]^{<\lambda}\times[\delta\times\lambda]^{<\lambda}\times\lambda\times\lambda\times \delta\}.$$

We define the function $h:D\cap E^{\lambda^+}_\lambda\rightarrow\lambda^+$ as follows.
Given $\delta\in D\cap E^{\lambda^+}_\lambda$, let $h(\delta):=\tau$
for the least $\tau<\delta$
which satisfies all of the following:
\begin{itemize}
\item[(a)] $\left\{ (i,j,c(\alpha,\beta))\mid i,j<\lambda,(\alpha,\beta)\in[\dom(f_\delta)]^2, f_\delta(\alpha)=i=f_\delta(\beta), p(\alpha,\beta)=j\right\}=\psi_\tau$;
\item[(b)] $f_\delta\restriction \delta=\varphi_\tau$;
\item[(c)] $\bigcup\{\{p(\alpha,\beta)\mid \alpha<\beta\}\cap\{p(\alpha,\beta')\mid \alpha<\beta\}\mid {(\beta,\beta')\in[\dom(f_\delta)]^2}\}\s\xi_\tau$;
\item[(d)] $\{ p(\alpha,\beta)\mid (\alpha,\beta)\in[\dom(f_\delta)]^2\}\s\mu_\tau$;
\item[(e)] $\{ \alpha<\delta\mid \exists \beta\in\dom(f_\delta)\setminus\delta~[p(\alpha,\beta)\le\max\{\xi_\tau,\mu_\tau\}]\}\s\epsilon_\tau$.
\end{itemize}

\begin{claim} $h$ is well-defined.
\end{claim}
\begin{proof} Let $\delta\in D\cap E^{\lambda^+}_\lambda$. First we make note of the following:
\begin{itemize}
\item The corresponding set of Clause~(a) is a subset of $\lambda^3$ of size $<\lambda$;
\item The corresponding set of Clause~(b) is a subset of $\delta\times\lambda$ of size $<\lambda$;
\item As $|\dom(f_\delta)|<\lambda$, the fact that $p$ has $\lambda$-almost-disjoint fibers ensures that an ordinal $\xi_\tau<\lambda$ as in Clause~(c) exists;
\item As $|\dom(f_\delta)|<\lambda$, an ordinal $\mu_\tau<\lambda$ as in Clause~(d) does exist;
\item As $|\dom(f_\delta)|<\lambda=\cf(\delta)$, the fact that $p$ has
injective fibers ensures that an ordinal $\epsilon_\tau<\lambda$ as in Clause~(e) exists.
\end{itemize}
So, since $\delta\in D$, a $\tau<\delta$ for which Clause (a)--(e) are satisfied does exist.
\end{proof}
To see that $h$ is as sought, fix a pair $\gamma<\delta$ of ordinals in $D\cap E^{\lambda^+}_\lambda$ such that $h(\gamma)=h(\delta)$,
say, both are  equal to $\tau$.
As $\delta\in C$, Clause~(b) implies that $f:=f_\gamma\cup f_\delta$ is a function.
To see that $f\in Q$, it suffices to verify Clause~(2) above
with $a:=\dom(f)$.
Towards a contradiction, suppose that there $i,j<\lambda$ and $(\alpha_0,\beta_0),(\alpha_1,\beta_1)\in[a]^2$ such that:
\begin{itemize}
\item $f(\alpha_0)=f(\alpha_1)=i=f(\beta_0)=f(\beta_1)$;
\item $p(\alpha_0,\beta_0)=j=p(\alpha_1,\beta_1)$;
\item  $c(\alpha_0,\beta_0)\neq c(\alpha_1,\beta_1)$.
\end{itemize}

Denote $a_\gamma:=\dom(f_\gamma)$ and $a_\delta:=\dom(f_\delta)$.
As $h(\gamma)=\tau=h(\delta)$, Clause~(a) implies that it cannot be the case that $\{(\alpha_0,\beta_0),(\alpha_1,\beta_1)\}\s [a_\gamma]^2\cup[a_\delta]^2$.
So, without loss of generality, assume that $(\alpha_0,\beta_0)\notin [a_\gamma]^2\cup[a_\delta]^2$.
By Clause~(b), in particular, $a_\gamma\cap\gamma=a_\delta\cap\delta$,
and so, since $(\alpha_0,\beta_0)\notin [a_\gamma]^2\cup[a_\delta]^2$, it must be the case that $\alpha_0\ge\gamma$ and $\beta_0\ge\delta$.
If $\alpha_0\ge\delta$, then since $\delta\in C$, we would get that $(\alpha_0,\beta_0)\in[a_\delta]^2$,
which is not the case. Altogether, $\gamma\le\alpha_0<\delta\le\beta_0$.
In particular, $\alpha_0\in(a_\gamma\setminus\gamma)$ and $\beta_0\in(a_\delta\setminus\delta)$.

By Clause~(e), $\epsilon_\tau<\gamma$, and hence $\alpha_0>\epsilon_\tau$.
It thus follows from Clause~(e) that $p(\alpha_0,\beta_0)>\max\{\xi_\tau,\mu_\tau\}$.
So $p(\alpha_1,\beta_1)=j=p(\alpha_0,\beta_0)>\mu_\tau$. Recalling Clause~(d), this means that $(\alpha_1,\beta_1)\notin [a_\gamma]^2\cup[a_\delta]^2$.
Hence, the same analysis we had for $(\alpha_0,\beta_0)$ is valid also for $(\alpha_1,\beta_1)$.
In particular, $\gamma\le\alpha_1<\delta\le\beta_1$ (so that $\{\beta_0,\beta_1\}\s a_\delta\setminus\delta$)
and $p(\alpha_1,\beta)>\xi_\tau$.
By Clause~(c) for $\gamma$, $\xi_\tau<\gamma<\alpha_0,\alpha_1$
and then by Clause~(c) for $\delta$ we infer that if $\beta_0\neq\beta_1$, then $p(\alpha_0,\beta_0)\neq p(\alpha_1,\beta_1)$.
It thus follows that $\beta_0=\beta_1$. As $p$ has has injective fibers it  follows that $\alpha_0=\alpha_1$,
contradicting the fact that $c(\alpha_0,\beta_0)\neq c(\alpha_1,\beta_1)$.
\end{proof}

The next result answers two questions from \cite{strongcoloringpaper}:
Question~48 in the negative and Question~49 in the affirmative. Recall that by
Proposition~\ref{Wildkjsbdkj}(2), the set of partitions $p:[\omega_1]^2\rightarrow\omega$
with injective, $\aleph_{1}$-almost-disjoint and $\aleph_{0}$-Cohen fibers is not vacuous.

\begin{cor}\label{threeitems} Suppose $\MA_{\aleph_1}(K)$ holds.  Then for every partition
  $p:[\omega_1]^2\rightarrow\omega$   with injective,
  $\aleph_{1}$-almost-disjoint and $\aleph_{0}$-Cohen fibers, for every coloring
  $c:[\omega_1]^2\rightarrow\omega$ there exists a decomposition $\aleph_1=\biguplus_{i<\omega}X_i$ such that,
  for every $i<\omega$:
  \begin{itemize}
  \item $X_i$ is uncountable;
  \item $X_i$ is $p$-omnichromatic, i.e., $p[[X_i]^2]=\omega$;
   \item $X_i$ is $(p,c)$-homogeneous, i.e., for every $j<\omega$, 
   $$c\restriction\{ (\alpha,\beta)\in[ X_i]^2\mid p(\alpha,\beta)=j\}\text{ is constant}.$$
  \end{itemize}
\end{cor}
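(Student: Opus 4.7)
The plan is to mimic the proof of Theorem~\ref{thm51} for $\lambda=\aleph_0$, but to invoke $\MA_{\aleph_1}(K)$ directly rather than route through $\gma_{\aleph_1}$. A direct application of Theorem~\ref{thm51} is unavailable, because its $\lambda=\aleph_0$ instance would demand $\aleph_0$-almost-disjoint (i.e., finite-intersection) fibers, whereas the corollary only assumes $\aleph_1$-almost-disjoint (countable-intersection) fibers.

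Given any coloring $c:[\omega_1]^2\rightarrow\omega$, I would define the notion of forcing $\mathbb{Q}$ exactly as in Theorem~\ref{thm51}: its conditions are the finite partial functions $f:a\rightarrow\omega$, $a\in[\omega_1]^{<\omega}$, for which each $\Gamma_{i,j}^{f}$ has size at most $1$. The density of $D_\beta$ (for $\beta<\omega_1$) is immediate, and the density of $D^\epsilon$ and $D_{i,j}$ (for $\epsilon<\omega_1$ and $i,j<\omega$) follows from $\aleph_0$-Cohen fibers by the very arguments used in Theorem~\ref{thm51}. Once $\mathbb{Q}$ is shown to have Property~K, the filter supplied by $\MA_{\aleph_1}(K)$ meeting these $\aleph_1$-many dense sets will yield the desired decomposition $\omega_1=\biguplus_{i<\omega}X_i$, with each $X_i$ uncountable, $p$-omnichromatic, and $(p,c)$-homogeneous, by the same verifications used in Theorem~\ref{thm51}.

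To establish Property~K, it suffices to verify that $\mathbb{Q}$ satisfies the $\omega_1$-stationary-cc (the remark following Definition~\ref{mak} does the rest). I would mimic the regressive-map construction at the end of the proof of Theorem~\ref{thm51}, with a single modification: the parameter $\xi_\tau$ is allowed to range over $\omega_1$ rather than $\omega$, since $\aleph_1$-almost-disjointness only guarantees that the intersection set of Clause~(c) is countable, and so no bound below $\omega$ need exist, whereas a bound below $\omega_1$ always does. Accordingly, the parameter space becomes $[\omega^3]^{<\omega}\times[\omega_1\times\omega]^{<\omega}\times\omega_1\times\omega\times\omega_1$, which is still of cardinality $\aleph_1$, and the subclub $D$ is redefined so that for each $\delta\in D$ the initial segment $\tau<\delta$ of the enumeration exhausts $[\omega^3]^{<\omega}\times[\delta\times\omega]^{<\omega}\times\delta\times\omega\times\delta$.

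The main anticipated difficulty is to confirm that this enlargement of $\xi_\tau$ does not disrupt the cross-pair compatibility analysis of Theorem~\ref{thm51}. Its decisive use there is the inequality $\xi_\tau<\gamma$ for $\gamma<\delta$ both receiving $h$-value $\tau$, and this still holds for free: regressivity forces $\tau<\gamma$, and $\gamma\in D$ then forces the $\xi$-coordinate of the $\tau$-th tuple to lie below $\gamma$. The remainder of the argument, combining injective fibers to collapse $(\alpha_0,\beta_0)$ onto $(\alpha_1,\beta_1)$ once $\beta_0=\beta_1$, Clauses~(d) and~(e) to force both bad pairs into the cross region, and Clause~(c) (now with a possibly countable $\xi_\tau$) to derive the contradiction, then proceeds verbatim. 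With Property~K in hand, $\MA_{\aleph_1}(K)$ closes the proof.
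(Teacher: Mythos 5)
Your instinct to re-run the proof of Theorem~\ref{thm51} is understandable, but the modification you propose does not work, and the obstacle you are trying to circumvent is not really there. The paper's own proof of Corollary~\ref{threeitems} is literally the instance $\lambda=\aleph_0$ of Theorem~\ref{thm51} (using $\aleph_0^{<\aleph_0}=\aleph_0$ and the implication $\MA_{\aleph_1}(K)\implies\gma_{\aleph_1}$ noted after Definition~\ref{defGMA}). Note that the hypothesis ``$\aleph_1$-almost-disjoint'' is vacuous for a partition into $\omega$: any two fibers are subsets of $\omega$, so their intersection is automatically countable. The partitions produced by Proposition~\ref{Wildkjsbdkj}(2) with $\lambda=\chi=\aleph_0$ in fact have $\aleph_0$-almost-disjoint fibers, i.e., finite pairwise intersections of fibers, and that is the hypothesis one should carry into the corollary; with it, the direct application of Theorem~\ref{thm51} goes through verbatim and nothing needs to be reproved.

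By contrast, your enlargement of the parameter $\xi_\tau$ to a countable ordinal destroys the argument. Since the range of $p$ is $\omega$, as soon as $\xi_\tau\ge\omega$ the condition $p(\alpha,\beta)\le\max\{\xi_\tau,\mu_\tau\}$ appearing in Clause~(e) holds for \emph{every} pair; hence, whenever $\dom(f_\delta)\setminus\delta\neq\emptyset$, the set in Clause~(e) is all of $\delta$ and no $\epsilon_\tau<\delta$ can contain it, so $h$ is no longer well-defined. Even setting that aside, the decisive step of the compatibility analysis is the inference $p(\alpha_0,\beta_0)>\max\{\xi_\tau,\mu_\tau\}\ge\xi_\tau$, which is unattainable when $\xi_\tau\ge\omega$; it is precisely this inequality, played against Clause~(c), that rules out $\beta_0\neq\beta_1$. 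The mechanism requires the fiber intersections recorded in Clause~(c) to be bounded strictly \emph{below} some values actually attained by $p$ on crossing pairs, and since all values of $p$ lie in $\omega$, this forces those intersections to be finite --- that is, it forces $\aleph_0$-almost-disjointness. So either read the corollary's hypothesis as $\aleph_0$-almost-disjoint and quote Theorem~\ref{thm51} directly (the paper's route), or accept that for partitions whose fibers have infinite pairwise intersections your Property~K verification needs a genuinely different idea, which the proposal does not supply.
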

\begin{proof} By Theorem~\ref{thm51}.
\end{proof}

It may be interesting to point out that the middle item in Corollary \ref{threeitems} is not required for establishing $z_{p}(c)\le \aleph_{0}$, but nevertheless holds.
It can be interpreted as a silent witness for $p$ being, from the viewpoint of a coloring, anti-Ramsey.

The upcoming theorem applies to a broader set of partitions than the one in \ref{threeitems} in return for
 allowing  a finite number of colors rather than a single color.
It also implies Clause~(2) of Theorem~A. 

\begin{thm}\label{thm56}
Suppose $\MA_{\aleph_1}(K)$ holds. Then for every partition $p$ which witnesses $\U(\omega_1,\allowbreak\omega_1,\omega,\omega)$,
and every coloring $c:[\omega_1]^2\rightarrow\omega$,
 there is a decomposition $\omega_1=\biguplus_{i<\omega}X_i$
such that for all $i,j<\omega$, $$\{ c(\alpha,\beta)\mid (\alpha,\beta)\in[ X_i]^2\ \&\ p(\alpha,\beta)=j\}\text{ is finite}.$$
\end{thm}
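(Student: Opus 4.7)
The plan is to adapt the forcing strategy of Theorem~\ref{thm51} so as to tolerate finitely many $c$-colors per $p$-cell instead of a single color. The natural forcing $\mathbb{Q}$ to consider has conditions $(f,F)$, where $f:a\rightarrow\omega$ is a finite partial function (so $a\in[\omega_1]^{<\omega}$), and $F:s\rightarrow[\omega]^{<\omega}$ is a finite partial function with $s\in[\omega\times\omega]^{<\omega}$, subject to the validity requirement $\Gamma_{i,j}^f\subseteq F(i,j)$ for every $(i,j)\in s$, with $\Gamma_{i,j}^f$ defined as in the proof of Theorem~\ref{thm51}. The extension relation is $(f',F')\le(f,F)$ iff $f\subseteq f'$, $F\subseteq F'$ as partial functions (so $F'\restriction\dom(F)=F$), and $(f',F')$ is again valid. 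The side condition $F$ plays the role of a commitment: once $(i,j)$ enters $\dom(F)$, the finite color reservoir $F(i,j)$ freezes, and thereafter every $c$-value realized on an $(i,j)$-cell of the generic function must fall into it.

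From here the density arguments are routine. For each $\beta<\omega_1$, density of $D_\beta:=\{(f,F)\mid\beta\in\dom(f)\}$ follows by extending $f$ to assign $\beta$ the fresh color $\sup(\im(f))+1$, contributing to no $\Gamma_{i,j}$; for each $(i,j)\in\omega\times\omega$, the set $D_{i,j}:=\{(f,F)\mid(i,j)\in\dom(F)\}$ is dense via $(f,F\cup\{((i,j),\Gamma_{i,j}^f)\})$. A filter $G$ meeting all such dense sets yields a total map $g:=\bigcup\{f\mid(f,F)\in G\}:\omega_1\rightarrow\omega$; fixing $(f_0,F_0)\in G\cap D_{i,j}$, compatibility within $G$ forces $\Gamma_{i,j}^f\subseteq F_0(i,j)$ for every $(f,F)\in G$, so $\Gamma_{i,j}^g\subseteq F_0(i,j)$ is finite. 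The decomposition $X_i:=g^{-1}\{i\}$ is then as required.

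The main obstacle, and the only place where the hypothesis on $p$ enters, is the verification of Property~K. Given $\langle(f_\alpha,F_\alpha)\mid\alpha<\omega_1\rangle$, I would first thin out, using only countability of the relevant parameter spaces, to an uncountable subfamily on which: $F_\alpha=F$ is a fixed partial function with $\dom F=s$; the sets $a_\alpha:=\dom(f_\alpha)$ form a $\Delta$-system with root $r$ satisfying $\max(r)<\min(a_\alpha\setminus r)$ and the sets $\{a_\alpha\setminus r\}$ are interval-ordered in $\alpha$; $f_\alpha\restriction r$ is constant; $|a_\alpha\setminus r|$ is a constant $l$; and upon enumerating $a_\alpha\setminus r$ as $\{\alpha^0<\cdots<\alpha^{l-1}\}$, the map $m\mapsto f_\alpha(\alpha^m)$ is independent of $\alpha$. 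Setting $\delta:=\max\{j\mid\exists i\,(i,j)\in s\}$, I would then invoke $\U(\omega_1,\omega_1,\omega,\omega)$ applied to the pairwise-disjoint family $\{a_\alpha\setminus r\}\subseteq[\omega_1]^l$ with parameter $\delta$ to extract a further uncountable refinement on which $p(x,y)>\delta$ whenever $x\in a_\alpha\setminus r$, $y\in a_\beta\setminus r$, and $\alpha<\beta$.

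The payoff should be that in this refined family, any two conditions $(f_\alpha,F)$ and $(f_\beta,F)$ with $\alpha<\beta$ admit $(f_\alpha\cup f_\beta,F)$ as a common extension: a pair $(x,y)\in[a_\alpha\cup a_\beta]^2$ contributing to $\Gamma_{i,j}^{f_\alpha\cup f_\beta}$ for some $(i,j)\in s$ must either lie in $[a_\alpha]^2$ or $[a_\beta]^2$, whence $c(x,y)\in F(i,j)$ already, or cross between $(a_\alpha\setminus r)\times(a_\beta\setminus r)$ (the reverse being blocked by interval-ordering), in which case $p(x,y)>\delta\ge j$ contradicts $p(x,y)=j$. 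This establishes Property~K, and an appeal to $\MA_{\aleph_1}(K)$ completes the proof.
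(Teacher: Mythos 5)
Your proposal is correct and follows essentially the same route as the paper's proof: a finite-condition forcing whose side component freezes, for each cell $(i,j)$, a finite commitment about the $c$-colors realized there, with Property~K verified by a $\Delta$-system/pigeonhole thinning followed by an application of $\U(\omega_1,\omega_1,\omega,\omega)$ to push the $p$-values of cross-pairs above every committed cell. The only difference is cosmetic: the paper encodes the commitment as a bound $m_q$ together with a map $f_q:m_q\times m_q\rightarrow\omega$ (requiring $c(\alpha,\beta)<f_q(g_q(\alpha),p(\alpha,\beta))$ and $p[[a_q]^2]\s m_q$), whereas you record an explicit finite reservoir $F(i,j)$ on a finite set of cells; both implementations work for the same reason.
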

\begin{proof} Define a notion of forcing $\mathbb Q$ consisting of conditions $q=(m_q,f_q,g_q)$ as follows:
\begin{enumerate}
\item $m_q<\omega$;
\item $f_q:m_q\times m_q\rightarrow\omega$ is a function;
\item $g_q:a_q\rightarrow m_q$ is a function, with $a_q\in[\omega_1]^{<\aleph_0}$;
\item for all $(\alpha,\beta)\in[a_q]^2$, $p(\alpha,\beta)<m_q$;
\item for all $(\alpha,\beta)\in[a_q]^2$, if $g_q(\alpha)=g_q(\beta)$, then  $c(\alpha,\beta)<f_q(g_q(\alpha),p(\alpha,\beta))$.
\end{enumerate}

A condition $q$ extends a condition $\bar q$ iff $m_q\ge m_{\bar q}$,
$f_q\supseteq f_{\bar q}$ and $g_q\supseteq g_{\bar q}$.

\begin{claim} For every $\beta<\omega_1$, $D_\beta:=\{ q\mid \beta\in a_q\}$ is dense in $\mathbb Q$
\end{claim}
\begin{proof} Let $\beta<\omega_1$. Given any condition $(m,f,g)$ in $\mathbb Q$ such that $\beta\notin\dom(g)$,
let $$m':=\max\{m,p(\alpha,\beta)\mid \alpha\in\dom(g)\}$$
and define a condition $q=(m_q,f_q,g_q)$,
by letting $m_q:=m'+1$,
letting $f_q:m_q\times m_q\rightarrow\omega$ be an arbitrary function extending $f$,
and letting $g_q:=g\cup\{(\beta,m')\}$.
\end{proof}

It thus follows that if $G$ is a filter over $\mathbb Q$ such that $G\cap D_\beta\neq\emptyset$ for all $\beta<\omega_1$,
then by letting $f:=\bigcup\{ f_q\mid q\in G\}$ and $g:=\bigcup\{ g_q\mid q\in G\}$,
we get that $\dom(f)=\omega\times\omega$, $\dom(g)=\omega_1$,
and for every $i,j<\omega$, if we let $X_i:=\{\beta<\omega_1\mid g(\beta)=i\}$,
then $\{ c(\alpha,\beta)\mid (\alpha,\beta)\in[ X_i]^2\ \&\  p(\alpha,\beta)=j\}\s f(i,j)$.

Now, let us verify that $\mathbb Q$ has Property~$K$.
To this end, suppose that $A$ is an uncountable family of conditions in $\mathbb Q$. By the pigeonhole principle, 
we may assume the existence of an integer $m$ and a function $f$
such that, for all $q\in A$, $m_q=m$ and $f_q=f$.
By the $\Delta$-system lemma, it may also be assumed that $\{ a_q\mid q\in A\}$ forms a $\Delta$-system with some root $r$.
For any $q\in A$, denote $a_q':=a_q\setminus r$.
Since the $a_q$'s are subsets of $\omega_1$, it can be assumed
that $r$ forms an initial segment of $a_q$ for each $q$ and that, if $q\neq \bar{q}$, then either $\max(a'_q)<\min(a'_{\bar{q}})$ or $\max(a'_{\bar q})<\min(a'_{q})$.
By shrinking further, we may assume that $q\mapsto g_q\restriction r$ is constant over $A$.
Next, by the choice of the partition $p$, we fix an uncountable $B\s A$ with the property that for all $\bar q,q\in B$,
if $\max(a_{\bar q}')<\min(a_q')$, then $\min(p[a_{\bar q}'\times a_q'])> m$.

To see that $\{ a_q\mid q\in B\}$ is directed, fix two conditions $\bar q\neq q$ in $B$.
Without loss of generality, we may assume that $\max(a_{\bar q}')<\min(a_q')$.

Set $a^*:=a_{\bar q}\cup a_q$,
$g^*:=g_{\bar q}\cup g_q$ and $m^*:=\max(p[a^*]^2)+1$.

\begin{claim}\label{claim722} For every $(\alpha,\beta)\in[a^*]^2\setminus([a_{\bar q}]^2\cup[a_{ q}]^2)$,
$(\alpha,\beta)\in a_{\bar q}'\times a_q'$.
\end{claim}
\begin{proof} Let $(\alpha,\beta)\in[a^*]^2\setminus([a_{\bar q}]^2\cup[a_{ q}]^2)$.
As $r=a_{\bar q}\cap a_q$, we infer that $\{\alpha,\beta\}\cap r=\emptyset$.
So $\{\alpha,\beta\}\cap a_{\bar q}'$ and $\{\alpha,\beta\}\cap a_{q}'$ are singletons.
Since $\alpha<\beta$ and $\max(a_{\bar q}')<\min(a_q')$, it altogether follows that $(\alpha,\beta)\in a_{\bar q}'\times a_q'$.
\end{proof}

Fix any function $f^*:m^*\times m^*\rightarrow\omega$ extending $f$ by letting, for all $(i,j)\in(m^*\times m^*)\setminus(m\times m)$,
$$f^*(i,j):=\max\{ c(\alpha,\beta)\mid (\alpha,\beta)\in[a^*]^2, g(\alpha)=i=g(\beta)\ \&\ p(\alpha,\beta)=j)\}+1.$$

Looking at Clauses (1)--(5) above, it is clear that for
$q^*:=(m^*,f^*,g^*)$ to be a condition in $\mathbb Q$ it suffices to
verify the following claim.

\begin{claim} Let $(\alpha,\beta)\in[a^*]^2$ with $g^*(\alpha)=g^*(\beta)$.
Then $$c(\alpha,\beta)<f^*(g^*(\alpha),p(\alpha,\beta)).$$
\end{claim}
\begin{proof} Denote $i:=g^*(\alpha)$ and $j:=p(\alpha,\beta)$.
We shall show that $c(\alpha,\beta)<f^*(i,j)$.

Of course, if $(\alpha,\beta)\in[a_{\bar q}]^2$, then $g_{\bar q}(\alpha)=i<m$, $p(\alpha,\beta)<m$
and $c(\alpha,\beta)=f_{\bar q}(i,j)=f^*(i,j)$.
Likewise, if $(\alpha,\beta)\in[a_{ q}]^2$, then  $c(\alpha,\beta)=f_{q}(i,j)=f^*(i,j)$.

Next, assume that $(\alpha,\beta)\notin[a_{\bar q}]^2\cup [a_q]^2$.
So, by Claim~\ref{claim722}, $(\alpha,\beta)\in a_{\bar q}'\times a_q'$.
As $j\ge\min(p[a_{\bar q}'\times a_q'])> m>\max(p[[a_{\bar q}]^2\cup[a_{ q}]^2])$,
we infer that $(i,j)\in(m^*\times m^*)\setminus(m\times m)$ and hence the definition of $f^*(i,j)$ makes it clear that
$c(\alpha,\beta)<f^*(i,j)$, as sought.
\end{proof}
So $q^*$ is a legitimate condition witnessing that $\bar q$ and $q$ are compatible.
Thus, we have demonstrated that $\mathbb Q$ indeed satisfies Property~$K$.
\end{proof}

We now present two $\zfc$ results which show  that the preceding is optimal.
To see how the first result connects to Theorem~\ref{thm56} note that any partition $p:[\omega_1]^2\rightarrow\omega$ with injective (or just finite-to-one) fibers
witnesses $\U(\omega_1,\omega_1,\omega,\omega)$.\footnote{For every regular uncountable cardinal $\kappa$ that is not greatly Mahlo, there is a partition $p:[\kappa]^2\rightarrow\omega$ with nowhere bounded-to-one fibers that nevertheless satisfy $\U(\kappa,\kappa,\omega,\omega)$; see \cite[Lemma~2.12(3), Lemma~2.2(3) and the proof Lemma~5.8]{paper35}.}

\begin{thm} There exist a partition $p:[\omega_1]^2\rightarrow\omega$ with injective fibers and a coloring $c:[\omega_1]^2\rightarrow\omega$
such that, for every $k<\omega$, and every $X\s\omega_1$ with $\otp(X)=\omega+k$, there exists $j<\omega$
such that $|\{c(\alpha,\beta)\mid (\alpha,\beta)\in[X]^2\ \&\ p(\alpha,\beta)=j\}|\ge k$.
\end{thm}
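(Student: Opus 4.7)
The plan is to build $p$ and $c$ from two standard ZFC objects. First, fix a coherent sequence of injective functions $\langle e_\beta \mid \beta < \omega_1 \rangle$, meaning each $e_\beta\colon \beta \to \omega$ is injective and for all $\alpha < \beta$, $\{\xi < \alpha \mid e_\alpha(\xi) \ne e_\beta(\xi)\}$ is finite. Such a sequence exists in ZFC: start with Todor\v{c}evi\'{c}'s coherent, finite-to-one function $\rho_1\colon [\omega_1]^2\to\omega$ (from walks on any $C$-sequence), and set $e_\beta(\alpha) := \psi(\rho_1(\alpha,\beta), t_\beta(\alpha))$ where $\psi\colon\omega\times\omega\to\omega$ is a fixed bijection and $t_\beta(\alpha) := |\{\alpha' < \alpha \mid \rho_1(\alpha',\beta) = \rho_1(\alpha,\beta)\}|$ is the internal enumeration of the finite $\rho_1$-fiber of $\alpha$ relative to $\beta$. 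Second, fix an eventually-different family $\langle g_\beta \mid \beta < \omega_1 \rangle \subseteq \omega^\omega$ (for $\beta \ne \beta'$, $\{n \mid g_\beta(n)=g_{\beta'}(n)\}$ is finite) via a transfinite diagonalization: at stage $\beta$ enumerate $\{g_{\beta'} \mid \beta'<\beta\}$ as $\{h_n \mid n<\omega\}$ and let $g_\beta(n) := \min(\omega\setminus\{h_0(n),\ldots,h_n(n)\})$.

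Now define $p(\alpha,\beta) := e_\beta(\alpha)$ and $c(\alpha,\beta) := g_\beta(e_\beta(\alpha))$; then $p$ has injective fibers. To verify the partition property, fix $k<\omega$ and $X\subseteq\omega_1$ with $\otp(X)=\omega+k$, and decompose $X = \{\alpha_n \mid n<\omega\} \cup \{\beta_0<\cdots<\beta_{k-1}\}$, with $(\alpha_n)$ strictly increasing and $\delta := \sup_n \alpha_n \le \beta_0$. Coherence of $\vec e$ makes $F := \bigcup_{i<k}\{\xi<\delta \mid e_{\beta_i}(\xi)\ne e_\delta(\xi)\}$ finite, and the eventually-different property makes $E := \bigcup_{i<i'<k}\{m<\omega \mid g_{\beta_i}(m)=g_{\beta_{i'}}(m)\}$ finite. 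Since $e_\delta$ is injective and $\alpha_n \notin F$ for cofinitely many $n$, the set $\{e_\delta(\alpha_n) \mid \alpha_n \notin F\}$ is infinite, so I may select $n$ with $\alpha_n \notin F$ and $j := e_\delta(\alpha_n) \notin E$. For every $i<k$, $p(\alpha_n,\beta_i) = e_{\beta_i}(\alpha_n) = e_\delta(\alpha_n) = j$, placing the $k$ pairs $(\alpha_n,\beta_i)$ inside the $p$-cell labelled $j$; and the $c$-values $c(\alpha_n,\beta_i) = g_{\beta_i}(j)$ are pairwise distinct because $j \notin E$, yielding $|\{c(\alpha,\beta) \mid (\alpha,\beta)\in[X]^2,\ p(\alpha,\beta)=j\}| \ge k$.

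The main obstacle is the coherence of the tie-breaker $t_\beta$; without it, the compound $e_\beta$ would fail $e_\alpha =^* e_\beta\restriction\alpha$ (and indeed any naive direct construction of a coherent injective sequence collapses at countable limits, where the ranges of earlier $e_\beta$'s can exhaust $\omega$). The key calculation is that for $\alpha<\beta$, with $F^{\rho_1} := \{\xi<\alpha \mid \rho_1(\xi,\alpha)\ne\rho_1(\xi,\beta)\}$ (finite by coherence of $\rho_1$), any $\xi > \max F^{\rho_1}$ with $\rho_1(\xi,\alpha)=\rho_1(\xi,\beta)=n$ satisfies
\[ t_\alpha(\xi) - t_\beta(\xi) = |\{\xi' \in F^{\rho_1} \mid \rho_1(\xi',\alpha)=n\}| - |\{\xi' \in F^{\rho_1} \mid \rho_1(\xi',\beta)=n\}|, \]
a quantity depending only on $n$ that vanishes except for the finitely many $n$ attained on $F^{\rho_1}$ by $\rho_1(\cdot,\alpha)$ or $\rho_1(\cdot,\beta)$. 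By the finite-to-one property of $\rho_1(\cdot,\alpha)$, only finitely many $\xi<\alpha$ have $\rho_1(\xi,\alpha)$ in this finite exceptional set of $n$'s, so $\{\xi<\alpha \mid t_\alpha(\xi)\ne t_\beta(\xi)\}$ is finite, completing the coherence and therefore the construction.
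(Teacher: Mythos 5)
Your proof is correct, and its skeleton matches the paper's: both take $p$ with injective and $\omega$-coherent fibers (your coherent injective sequence $\langle e_\beta\mid\beta<\omega_1\rangle$ is exactly such a $p$, and your $\rho_1$-with-tie-breaker construction is a legitimate substitute for the citation the paper uses), both split $X$ into its $\omega$-part and its top $k$ points, and both use coherence to find a single $\alpha$ below $\beta_0$ whose $p$-value is the same $j$ against all of $\beta_0,\dots,\beta_{k-1}$. Where you genuinely diverge is in the coloring: the paper takes $c$ to be an \emph{independent} coloring with injective and $\omega$-almost-disjoint fibers (built from an almost disjoint family of subsets of $\omega$, via its Proposition on existence of such partitions), and gets distinctness of the $k$ colors by excluding the finitely many $\alpha$'s whose $c$-values land in the finite overlap $T$ of the fibers $c(\cdot,\beta_i)$. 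You instead couple $c$ to $p$ by setting $c(\alpha,\beta):=g_\beta(p(\alpha,\beta))$ for an eventually different family $\langle g_\beta\rangle$, so that within a cell $j$ the color of $(\alpha,\beta_i)$ depends only on $j$ and $i$, and distinctness is immediate once $j$ avoids the finite agreement set $E$. Your $c$ need not have injective or almost-disjoint fibers, so it is a genuinely different witness; what your version buys is a more elementary ingredient (an eventually different family via straightforward diagonalization, rather than an almost disjoint family threaded through the fiber construction) and a cleaner distinctness argument, at the cost of making $c$ definable from $p$ rather than exhibiting the phenomenon for a generic pair of independently chosen objects, which is the form the paper's statement of its Proposition~2.3 naturally supplies.
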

\begin{proof}  By Proposition~\ref{Wildkjsbdkj}, let us fix a partition $p:[\omega_1]^2\rightarrow\omega$
with injective and $\omega$-coherent fibers, and a coloring $c:[\omega_1]^2\rightarrow\omega$ with injective and $\omega$-almost-disjoint fibers.
Now, given $k<\omega$ and an increasing sequence $\langle \xi_n\mid n<\omega+k\rangle$ of countable ordinals, we do the following.
For each $i<k$, denote $\beta_i:=\xi_{\omega+i}$.
\begin{itemize}
\item As $p$ has $\omega$-coherent fibers, $a:=\bigcup_{i<i'<k}\{ \alpha<\beta_0\mid p(\alpha,\beta_i)\neq p(\alpha,\beta_{i'})\}$ is finite;
\item As $c$ has $\omega$-almost-disjoint fibers,
$T:=\bigcup_{i<i'<k}\{c(\alpha,\beta_i)\mid \alpha<\beta_0\}\cap\{ c(\alpha,\beta_{i'})\mid \alpha<\beta_0\}$ is finite;
\item As $c$ has injective fibers, $a':=\bigcup_{i<k}\{\alpha<\beta_0\mid c(\alpha,\beta_i)\in T\}$ is finite.
\end{itemize}

Now, pick $n<\omega$ such that $\xi_n\notin a\cup a'$, and set $\alpha:=\xi_n$. Let $j:=p(\alpha,\beta_0)$.
\begin{itemize}
\item As $\alpha\in \beta_0\setminus a$, we infer that $p(\alpha,\beta_i)=j$ for all $i<k$;
\item As $\alpha\in \beta_0\setminus a'$, we infer that $c(\alpha,\beta_i)\neq c(\alpha,\beta_{i'})$ for all $i<i'<k$.
\end{itemize}
So $|\{c(\alpha,\beta)\mid (\alpha,\beta)\in[X]^2\ \&\ p(\alpha,\beta)=j\}|\ge k$.
\end{proof}

\begin{thm}\label{thm49}
For every partition $p:[\omega_1]^2\rightarrow\omega$ and every uncountable $X\s\omega_1$ such that $p\restriction[X]^2$ does not witness $\U(\omega_1,\allowbreak\omega_1,\omega,\omega)$,
for every coloring $c:[\omega_1]^2\rightarrow\omega$ with finite-to-one fibers,
there exists $j<\omega$ such that $$\{ c(\alpha,\beta) \mid (\alpha,\beta)\in[X]^2, p(\alpha,\beta)=j\}\text{ is infinite}.$$
\end{thm}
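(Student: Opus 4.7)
The plan is to prove the contrapositive: assume $c:[\omega_1]^2\rightarrow\omega$ is a coloring with finite-to-one fibers such that $C_j:=\{c(\alpha,\beta)\mid (\alpha,\beta)\in[X]^2,\ p(\alpha,\beta)=j\}$ is finite for every $j<\omega$, and deduce that $p\restriction[X]^2$ witnesses $\U(\omega_1,\omega_1,\omega,\omega)$, contradicting the hypothesis on $X$.

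The crucial consequence of the assumption is the following finiteness property: for every $\delta<\omega$ and every $\beta\in X$, the set $\{\alpha\in X\cap\beta\mid p(\alpha,\beta)\le\delta\}$ is finite. Indeed, any such $\alpha$ satisfies $c(\alpha,\beta)\in F:=\bigcup_{j\le\delta}C_j$, which is finite, so the set in question is contained in $\bigcup_{t\in F}\{\alpha<\beta\mid c(\alpha,\beta)=t\}$, a finite union of finite sets by the finite-to-one hypothesis on $c$.

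To verify Definition~\ref{uprinciple} for $p\restriction[X]^2$, fix $\sigma<\omega$ (necessarily $\sigma\ge 1$), a pairwise disjoint family $\mathcal{A}\s[X]^\sigma$ of size $\aleph_1$, and $\delta<\omega$. For each $b\in\mathcal{A}$, set $F_b:=\bigcup_{\beta\in b}\{\alpha\in X\cap\beta\mid p(\alpha,\beta)\le\delta\}$, which is finite by the property displayed above. For $a,b\in\mathcal{A}$ with $a<b$, observe that $\min(p[a\times b])\le\delta$ if and only if $a\cap F_b\neq\emptyset$. Since $\mathcal{A}$ is pairwise disjoint, the set $N(b):=\{a\in\mathcal{A}\mid a<b\text{ and }a\cap F_b\neq\emptyset\}$ has at most $|F_b|<\omega$ elements.

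Well-order $\mathcal{A}$ as $\langle b_\xi\mid\xi<\omega_1\rangle$ by increasing maxima (using that $\max$ is injective on a pairwise disjoint family in $[\omega_1]^{\ge 1}$); since $a<b$ in the sense of sets of ordinals implies $\max(a)<\max(b)$, each $N(b_\xi)$ is contained in $\{b_\eta\mid\eta<\xi\}$. Define $\chi:\omega_1\rightarrow\omega$ by recursion, letting $\chi(\xi)$ be the least natural number not in $\{\chi(\eta)\mid b_\eta\in N(b_\xi)\}$; this is well-defined as $N(b_\xi)$ is finite. By pigeonhole, there exists $k<\omega$ such that $\mathcal{B}:=\{b_\xi\mid\chi(\xi)=k\}$ is uncountable; and for any $(a,b)\in[\mathcal{B}]^2$ we have $a\notin N(b)$ (because $\chi$ is constant on $\mathcal{B}$ and avoids $N(b)$), whence $\min(p[a\times b])>\delta$, as required. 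The only conceptually non-routine step is this final coloring argument, but it is the standard observation that any graph on $\omega_1$ whose back-degree in some well-ordering of type $\omega_1$ is everywhere finite has countable chromatic number, hence admits an uncountable independent set.
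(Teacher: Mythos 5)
Your proof is correct, but it takes a genuinely different route from the paper's. You prove the contrapositive: from a finite-to-one coloring $c$ all of whose $p$-cells on $[X]^2$ are finite, you extract the key structural fact that every set $\{\alpha\in X\cap\beta\mid p(\alpha,\beta)\le\delta\}$ is finite, and then verify $\U(\omega_1,\omega_1,\omega,\omega)$ for $p\restriction[X]^2$ directly by a greedy-colouring argument (a graph on $\omega_1$ with finite back-degree in a type-$\omega_1$ well-ordering has countable chromatic number, hence an uncountable independent set). The paper instead argues forward from the failure of $\U$: it fixes a bad pairwise disjoint family $\mathcal A\s[X]^k$ and a bound $n$, applies the Dushnik--Miller relation $\omega_1\rightarrow(\omega_1,\omega+1)^2$ to obtain a $<$-increasing $(\omega+1)$-sequence from $\mathcal A$ all of whose pairs meet $n$ under $p$, and then pigeonholes to find a single $\beta$ and infinitely many $\alpha_i<\beta$ lying in one $p$-cell $j<n$; finite-to-one-ness of $c$ then forces that cell to receive infinitely many colors. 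Your version is more elementary and self-contained (no appeal to Dushnik--Miller) and isolates a clean equivalence between ``all cells finite'' and the finiteness of the low-$p$-value predecessor sets; the paper's version is shorter given the cited partition relation and has the mild advantage of producing, from the failure of $\U$ alone, a single cell $j$ and a single vertical fiber that witnesses infinitude uniformly for every finite-to-one coloring. All steps of your argument check out, including the reduction of $\min(p[a\times b])\le\delta$ to $a\cap F_b\neq\emptyset$ and the bound $|N(b)|\le|F_b|$ from pairwise disjointness.
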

\begin{proof} Suppose  $p$ and $X$ are as above.
Fix $n,k<\omega$ and an uncountable pairwise disjoint family $\mathcal A\s[X]^k$,
such that for every uncountable $\mathcal B\s\mathcal A$ there is a pair $(a,b)\in[\mathcal B]^2$
such that $p[a\times b]\cap n\neq\emptyset$. By the Dushhnik-Miller theorem, then,
there exists a $<$-increasing sequence  $\langle a_i \mid i<\omega+1\rangle$ of elements of $\mathcal A$
such that $p[a_i\times a_{i'}]\cap n\neq\emptyset$ for all $i<i'<\omega+1$.
It follows that there exist $I\in[\omega]^\omega$, $\beta\in a_\omega$,  and $\langle \alpha_i\mid i\in I\rangle\in\prod_{i\in I}a_i$
such that $i\mapsto p(\alpha_i,\beta)$ is constant over $I$ with some value $j<n$.
Then, for every coloring $c:[\omega_1]^2\rightarrow\omega$ with finite-to-one fibers,
the set $\{ c(\alpha,\beta) \mid (\alpha,\beta)\in[X]^2, p(\alpha,\beta)=j\}$ is infinite.
\end{proof}

For a partition $p:[\kappa]^2\rightarrow\mu$, denote by $\kappa\rightarrow_p[\kappa]^2_{\theta,{<}\theta'}$
the assertion that for every coloring $c:[\kappa]^2\rightarrow\theta$,
there is $X\s\kappa$ of size $\kappa$ such that, for any cell $j<\mu$,
$$|\{ c(\alpha,\beta)\mid (\alpha,\beta)\in[ X]^2\ \&\ p(\alpha,\beta)=j\}|<\theta'.$$

The next result is Theorem~B.
\begin{cor} Assume $\MA_{\aleph_1}(K)$. Then for every partition $p:[\omega_1]^2\rightarrow\omega$, the following are equivalent:
\begin{enumerate}
\item $\omega_1\rightarrow_p[\omega_1]^2_{\omega,\text{finite}}$;
\item There exists $X\in[\omega_1]^{\aleph_1}$ such that $p\restriction[X]^2$ witnesses $\U(\omega_1,\allowbreak\omega_1,\omega,\omega)$.
\end{enumerate}
\end{cor}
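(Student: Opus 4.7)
The plan is to prove the two directions separately; $\MA_{\aleph_1}(K)$ will be needed only for $(2)\Rightarrow(1)$, while $(1)\Rightarrow(2)$ is a pure $\zfc$ consequence of Theorem~\ref{thm49}.

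For $(2)\Rightarrow(1)$, fix $X$ as in $(2)$ together with an arbitrary coloring $c:[\omega_1]^2\to\omega$. Let $\pi:\omega_1\to X$ be the order-isomorphism and define $\tilde p,\tilde c:[\omega_1]^2\to\omega$ by $\tilde p(\alpha,\beta):=p(\pi(\alpha),\pi(\beta))$ and $\tilde c(\alpha,\beta):=c(\pi(\alpha),\pi(\beta))$. Since the $\U(\omega_1,\omega_1,\omega,\omega)$ principle is defined purely in terms of the order-theoretic data of Definition~\ref{uprinciple} (cardinalities of pairwise disjoint families, values $\min p[a\times b]$), the partition $\tilde p$ inherits from $p\restriction[X]^2$ the property of witnessing it. Apply Theorem~\ref{thm56} to the pair $(\tilde p,\tilde c)$ to secure a decomposition $\omega_1=\biguplus_{i<\omega}Y_i$ such that for all $i,j<\omega$ the set $\{\tilde c(\alpha,\beta)\mid(\alpha,\beta)\in[Y_i]^2,\ \tilde p(\alpha,\beta)=j\}$ is finite. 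By pigeonhole, some $i<\omega$ has $\pi[Y_i]$ uncountable, and transporting back along $\pi$ shows that $\pi[Y_i]$ is the desired witness for $\omega_1\rightarrow_p[\omega_1]^2_{\omega,\text{finite}}$ applied to $c$.

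For $(1)\Rightarrow(2)$, I argue the contrapositive. Assume that no uncountable $X\subseteq\omega_1$ has $p\restriction[X]^2$ witnessing $\U(\omega_1,\omega_1,\omega,\omega)$. By Proposition~\ref{Wildkjsbdkj}(1) applied with $\lambda=\aleph_0$, fix any partition $c:[\omega_1]^2\to\omega$ with injective, and in particular finite-to-one, fibers; reinterpret $c$ as a coloring. For every uncountable $X\subseteq\omega_1$, the hypothesis together with Theorem~\ref{thm49} yields some $j<\omega$ for which $\{c(\alpha,\beta)\mid(\alpha,\beta)\in[X]^2,\ p(\alpha,\beta)=j\}$ is infinite. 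Thus this single $c$ refutes $\omega_1\rightarrow_p[\omega_1]^2_{\omega,\text{finite}}$, as required.

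The only step that is not immediate is the transfer of the $\U$-property along the order-isomorphism $\pi$ in the first direction, and this is routine since Definition~\ref{uprinciple} is invariant under order-isomorphism of the underlying ordinal. Past that, $(2)\Rightarrow(1)$ reduces directly to Theorem~\ref{thm56} and $(1)\Rightarrow(2)$ to Theorem~\ref{thm49}; notably, $\MA_{\aleph_1}(K)$ plays no role in the latter implication.
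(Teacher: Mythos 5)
Your proof is correct and follows essentially the same route as the paper's: $(1)\Rightarrow(2)$ reduces to Theorem~\ref{thm49} via a finite-to-one coloring (you phrase it contrapositively, the paper directly, but the content is identical), and $(2)\Rightarrow(1)$ reduces to Theorem~\ref{thm56}. Your explicit transfer of the $\U$-property and of the decomposition along the order-isomorphism $\pi:\omega_1\to X$ is a small point of added care that the paper leaves implicit when it applies Theorem~\ref{thm56} to $p\restriction[X]^2$.
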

\begin{proof} 
$(1)\implies(2)$: Fix any coloring $c:[\omega_1]^2\rightarrow\omega$ with finite-to-one fibers.
Assuming that $\omega_1\rightarrow_p[\omega_1]^2_{\omega,\text{finite}}$ holds,
let us now fix $X\in[\omega_1]^{\aleph_1}$ that witnesses the instance $\omega_1\rightarrow_p[\omega_1]^2_{\omega,\text{finite}}$ for the coloring $c$.
This means that 
$\{ c(\alpha,\beta)\mid (\alpha,\beta)\in[X]^2\ \&\ p(\alpha,\beta)=j\}$ is finite for every $j<\omega$.
So, by Theorem~\ref{thm49},  $p\restriction[X]^2$ must witness $\U(\omega_1,\omega_1,\omega,\omega)$.

$(2)\implies(1)$: 
Fix $X\in[\omega_1]^{\aleph_1}$ such that $p\restriction[X]^2$ witnesses $\U(\omega_1,\allowbreak\omega_1,\omega,\omega)$.
Then, by Theorem~\ref{thm56}, for every coloring $c:[\omega_1]^2\rightarrow\omega$, 
 there is a decomposition $X=\biguplus_{i<\omega}X_i$
such that, for all $i,j<\omega$, $\{ c(\alpha,\beta)\mid (\alpha,\beta)\in[ X_i]^2\ \&\ p(\alpha,\beta)=j\}$ is finite.
Fix $i<\omega$ such that $X_i$ is uncountable.
Then $X_i$ witnesses the instance $\omega_1\rightarrow_p[\omega_1]^2_{\omega,\text{finite}}$ for the coloring $c$.
\end{proof}

The same proof yields:
\begin{cor} Assuming $\MA_{\aleph_1}(K)$, for every partition $p:[\omega_1]^2\rightarrow\omega$, the following are equivalent:
\begin{enumerate}
\item There is a decomposition $\omega_1=\biguplus_{i<\omega}X_i$
such that, for all $i,j<\omega$, $$\{ c(\alpha,\beta)\mid (\alpha,\beta)\in[ X_i]^2\ \&\ p(\alpha,\beta)=j\}\text{ is finite};$$
\item There is a decomposition $\omega_1=\biguplus_{i<\omega}X_i$
such that, for all $i<\omega$,
$p\restriction[X_i]^2$ witnesses $\U(\omega_1,\allowbreak\omega_1,\omega,\omega)$.\qed
\end{enumerate}
\end{cor}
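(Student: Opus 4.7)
The plan is to adapt the proof of the preceding corollary (Theorem~B) by running both implications componentwise over the prescribed decomposition. Note first that in clause (2) each $X_i$ must have size $\aleph_1$, since $\U(\omega_1,\omega_1,\omega,\omega)$ is meaningful only on domains of cardinality $\aleph_1$; the restriction $p\restriction[X_i]^2$ is read via the order-isomorphism $X_i\cong\omega_1$.

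For $(2)\Rightarrow(1)$: fix any coloring $c:[\omega_1]^2\rightarrow\omega$. For each $i<\omega$, apply Theorem~\ref{thm56} to the partition $p\restriction[X_i]^2$ (transported to $\omega_1$) and the induced coloring to obtain a sub-decomposition $X_i=\biguplus_{k<\omega}Y_{i,k}$ with $\{c(\alpha,\beta)\mid(\alpha,\beta)\in[Y_{i,k}]^2,\ p(\alpha,\beta)=j\}$ finite for all $k,j<\omega$. The reindexed countable family $\{Y_{i,k}\mid i,k<\omega\}$ is then the desired decomposition of $\omega_1$.

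For $(1)\Rightarrow(2)$: fix any coloring $c:[\omega_1]^2\rightarrow\omega$ with finite-to-one fibers (trivially available). Applying (1) to this $c$ yields a decomposition $\omega_1=\biguplus_{i<\omega}X_i$ with the finiteness property. By the contrapositive of Theorem~\ref{thm49}, every uncountable $X_i$ has $p\restriction[X_i]^2$ witnessing $\U(\omega_1,\omega_1,\omega,\omega)$. Since $\omega_1$ is uncountable, some $X_{i_0}$ is uncountable; absorb into $X_{i_0}$ the (at most countable) union of the countable $X_i$'s, and discard the empty indices.

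The one point requiring verification is that this absorption preserves $\U$: if $X\subseteq Y\subseteq\omega_1$ with $Y\setminus X$ countable and $p\restriction[X]^2$ witnesses $\U(\omega_1,\omega_1,\omega,\omega)$, then so does $p\restriction[Y]^2$. Given a pairwise disjoint family $\mathcal{A}\subseteq[Y]^\sigma$ of size $\aleph_1$ with $\sigma<\omega$, at most countably many $a\in\mathcal{A}$ can meet $Y\setminus X$ (as the $a$'s are pairwise disjoint finite sets), so $\mathcal{A}':=\{a\in\mathcal{A}\mid a\subseteq X\}$ still has size $\aleph_1$, and applying the $\U$-property of $p\restriction[X]^2$ to $\mathcal{A}'$ delivers the required $\mathcal{B}\subseteq\mathcal{A}'\subseteq\mathcal{A}$. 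This absorption lemma is the main (though brief) obstacle; everything else is a direct componentwise invocation of Theorems~\ref{thm56} and~\ref{thm49}.
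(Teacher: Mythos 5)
Your proof is correct and takes essentially the route the paper intends: its own justification is only the remark ``the same proof yields,'' i.e.\ a componentwise application of Theorem~\ref{thm56} (transported to each $X_i$) for $(2)\Rightarrow(1)$ and of Theorem~\ref{thm49} with a fixed finite-to-one coloring for $(1)\Rightarrow(2)$, which is exactly what you do. Your absorption lemma for the countable pieces correctly settles a point the paper leaves implicit (alternatively, countable or empty $X_i$ satisfy the $\U$-clause vacuously, since they carry no uncountable pairwise disjoint families).
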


For completeness, we mention that by \cite[Corollary~29]{strongcoloringpaper}
it is consistent with $\MA_{\omega_1}(\sigma\text{-linked})$ that $\omega_1\nrightarrow_p[\omega_1]^2_{\omega}$
(in fact, $\Pr_{0}(\omega_1,\omega_1,\omega_1,\omega)_{p}$) holds
for any partition $p:[\omega_1]^2\rightarrow\omega$.

\section{Acknowledgments}

Kojman was partially supported by the Israel Science Foundation (grant agreement 665/20).
Rinot was partially supported by the Israel Science Foundation (grant agreement 2066/18)
and by the European Research Council (grant agreement ERC-2018-StG 802756).
Stepr\={a}ns was partially supported by NSERC of Canada.


\begin{thebibliography}{KRS21b}

\bibitem[CKS21]{strongcoloringpaper}
William {Chen-Mertens}, Menachem Kojman, and Juris Steprans.
\newblock Strong colorings over partitions.
\newblock {\em The Bulletin of Symbolic Logic}, 27(1):67–90, 2021.

\bibitem[EHR65]{MR202613}
P.~Erd\H{o}s, A.~Hajnal, and R.~Rado.
\newblock Partition relations for cardinal numbers.
\newblock {\em Acta Math. Acad. Sci. Hungar.}, 16:93--196, 1965.

\bibitem[Kan09]{Kanamori}
Akihiro Kanamori.
\newblock {\em The higher infinite}.
\newblock Springer Monographs in Mathematics. Springer-Verlag, Berlin, second
  edition, 2009.
\newblock Large cardinals in set theory from their beginnings, Paperback
  reprint of the 2003 edition.

\bibitem[KRS21a]{paper50}
Menachem Kojman, Assaf Rinot, and Juris Steprans.
\newblock Ramsey theory over partitions {II}: Negative Ramsey relations and pump-up theorems.
\newblock Submitted April 2021.
\newblock \verb"http://assafrinot.com/paper/50".

\bibitem[KRS21b]{paper55}
Menachem Kojman, Assaf Rinot, and Juris Steprans.
\newblock Ramsey theory over partitions {III}: Strongly {L}uzin sets and partition relations.
\newblock {\em Proc. Amer. Math. Soc.}, Accepted April 2022.
\newblock \verb"http://assafrinot.com/paper/55".

\bibitem[LHR18]{paper34}
Chris Lambie-Hanson and Assaf Rinot.
\newblock Knaster and friends {I}: {C}losed colorings and precalibers.
\newblock {\em Algebra Universalis}, 79(4):Art. 90, 39, 2018.

\bibitem[LHR21]{paper35}
Chris Lambie-Hanson and Assaf Rinot.
\newblock Knaster and friends {II}: {T}he {C}-sequence number.
\newblock {\em J. Math. Log.}, 21(1):2150002, 54, 2021.

\bibitem[Mit73]{MR313057}
William Mitchell.
\newblock Aronszajn trees and the independence of the transfer property.
\newblock {\em Ann. Math. Logic}, 5:21--46, 1972/73.

\bibitem[Ram30]{ramsey}
F.P. Ramsey.
\newblock On a problem of formal logic.
\newblock {\em Proc. London Math. Soc.}, pages 264--286, 1930.

\bibitem[Rin14]{paper18}
Assaf Rinot.
\newblock Chain conditions of products, and weakly compact cardinals.
\newblock {\em Bull. Symb. Log.}, 20(3):293--314, 2014.

\bibitem[RT20]{MR4190059}
Dilip Raghavan and Stevo Todorcevic.
\newblock Proof of a conjecture of {G}alvin.
\newblock {\em Forum Math. Pi}, 8:e15, 23, 2020.

\bibitem[She78]{Sh:80}
Saharon Shelah.
\newblock A weak generalization of {M}{A} to higher cardinals.
\newblock {\em Israel Journal of Mathematics}, 30:297--306, 1978.

\bibitem[She88]{MR955139}
Saharon Shelah.
\newblock Was {S}ierpi\'nski right? {I}.
\newblock {\em Israel J. Math.}, 62(3):355--380, 1988.

\bibitem[Sie33]{MR1556708}
Waclaw Sierpi{\'n}ski.
\newblock Sur un probl\`eme de la th\'eorie des relations.
\newblock {\em Ann. Scuola Norm. Sup. Pisa Cl. Sci. (2)}, 2(3):285--287, 1933.

\bibitem[Tod83]{MR716846}
Stevo Todor\v{c}evi\'{c}.
\newblock Forcing positive partition relations.
\newblock {\em Trans. Amer. Math. Soc.}, 280(2):703--720, 1983.

\bibitem[Tod87]{TodActa}
Stevo Todor{\v{c}}evi{\'c}.
\newblock Partitioning pairs of countable ordinals.
\newblock {\em Acta Math.}, 159(3-4):261--294, 1987.

\bibitem[Tod07]{TodWalks}
Stevo Todorcevic.
\newblock {\em Walks on ordinals and their characteristics}, volume 263 of {\em
  Progress in Mathematics}.
\newblock Birkh\"auser Verlag, Basel, 2007.

\end{thebibliography}
\end{document}